\pgfplotsset{compat=1.15}
\newsavebox\CBox
\newcommand\hcancel[2][0.5pt]{%
  \ifmmode\sbox\CBox{$#2$}\else\sbox\CBox{#2}\fi%
  \makebox[0pt][l]{\usebox\CBox}%
  \rule[0.5\ht\CBox-#1/2]{\wd\CBox}{#1}}
\numberwithin{equation}{section}
\def\@makechapterhead#1{%
  \vspace*{10\p@}%
  {\parindent \z@ \raggedright \sffamily
    \interlinepenalty\@M
    \Huge\bfseries \thechapter \space\space #1\par\nobreak
    \vskip 30\p@
  }}
\def\@makeschapterhead#1{%
  \vspace*{10\p@}%
  {\parindent \z@ \raggedright
    \sffamily
    \interlinepenalty\@M
    \Huge \bfseries  #1\par\nobreak
    \vskip 30\p@
  }}
  \def\subsection{\@startsection{subsection}{2}%
    \z@{.5\linespacing\@plus.7\linespacing}
  {.5\baselineskip}%
    {\normalfont\flushleft\scshape}%
  }
  \def\subsubsection{\@startsection{subsubsection}{2}%
    \z@{.5\linespacing\@plus.7\linespacing}
  {.5\baselineskip}%
    {\normalfont\flushleft\scshape}%
  }
  \patchcmd{\section}{\scshape}{\bfseries}{}{}
  \patchcmd{\subsection}{\scshape}{\bfseries}{}{}
  \patchcmd{\subsubsection}{\scshape}{\bfseries}{}{}
  \renewcommand{\@secnumfont}{\bfseries}
\DeclareRobustCommand\mapstofill{%
  $\m@th
  {\mapstochar}%
  \smash-\mkern-7mu
  \cleaders\hbox{$\mkern-2mu\smash-\mkern-2mu$}\hfill
  \mkern-7mu
  \mathord\rightarrow
  $%
}
\renewcommand{\div}{\text{div}}
\DeclareMathOperator{\arcsinh}{arcsinh}
\DeclareMathOperator{\sign}{sign}
\DeclareMathOperator{\diff}{d}
\DeclareMathOperator{\Tan}{T}
\DeclareMathOperator{\tracemine}{trace}
\DeclareMathOperator{\Ric}{Ric}
\theoremstyle{definition}
\newtheorem{deff}{Definition}[section]
\newtheorem{rem}[deff]{Remark}
\newtheorem{ex}[deff]{Example}
\theoremstyle{definition}
\newtheorem{prop}[deff]{Proposition}
\newtheorem{thmm}[deff]{Theorem}
\newtheorem*{thmm*}{Theorem}
\newtheorem{lemma}[deff]{Lemma}
\newtheoremstyle{named}{}{}{\itshape}{}{\bfseries}{.}{.5em}{\thmnote{#3's }#1}
\theoremstyle{named}
\newtheorem{claim}{Claim}
\theoremstyle{remark}
\newtheorem*{claimproof}{Proof of Claim}
\title[New Examples of Translating Solitons in GRW Geometries]{New Examples of Translating Solitons \\ in Generalised Robertson-Walker Geometries}
\author[Diego Artacho, Marie-Am\'elie Lawn, and Miguel Ortega]
{Diego Artacho$^{*}$, Marie-Am\'elie Lawn$^{*}$, and Miguel Ortega$^{\dagger}$
}
\address{Department of Mathematics, Imperial College London, London, SW7 2AZ, United Kingdom.}
\email{ d.artacho21@imperial.ac.uk, m.lawn@imperial.ac.uk}
\address{Institute of Mathematics, Department of Geometry and Topology, Faculty of Sciences, Campus de Fuentenueva, 18079 Granada (Spain).}
\email{miortega@ugr.es}
\begin{document}

\keywords{Generalised Robertson-Walker, solitons, warping, spacetime, mean curvature flow, Grim Reapers, Lorentzian}

\vspace{-2cm}
\maketitle
\vspace{-.5cm}
\begin{align*}
   \small {}^* \textit{Imperial College London},  \hspace{.2cm} {}^{\dagger} \textit{Universidad de Granada}
\end{align*}

\begin{abstract}
    \scriptsize
    Translators can be regarded as submanifolds which satisfy the mean curvature flow equation when evolving by translations along a distinguished vector field of the ambient space. We study translators in Generalised Robertson-Walker spacetimes, due to their importance as Lorentzian manifolds, and because they admit a natural conformal Killing timelike vector field carrying substantial geometric information, which will play the role of this translating vector field. We identify three one-parameter families of warping functions for which these objects exist. As a first example of this notion of translator, we classify the analogues of the classical Grim Reapers within this context.
\end{abstract}

{\scriptsize \tableofcontents}

\section{Introduction}\label{section:intro}

Since the seminal work of Huisken \cite{huisken}, solutions to the mean curvature flow in $\mathbb{R}^n $ have been extensively studied. Solitons, which are special solutions evolving by a group of conformal transformations of $\mathbb{R}^n$, have received particular attention \cite{Altschuler,chen,Clutterbuck,guang,notes,Smoczyk}. If the group is a subgroup of translations of the ambient space, these solutions are called translating solitons, or translators, and satisfy the simplified mean curvature flow equation
$$H=X^{\perp},$$
where the translating direction $X\in\mathbb{R}^{n}$ is a constant vector. Translating solitons are of great importance not only because they are eternal solutions to the mean curvature flow, but also because they appear naturally in the study of singularities \cite{Hamilton,singularities} and are intimately related to the theory of minimal surfaces. For example, it is well known that they are equivalent to minimal surfaces for a conformal metric \cite{ilmanenbook}. In recent years, many families of translators have been constructed using various techniques \cite{Clutterbuck,Smoczyk,Spruck}. Of particular interest are the translating solitons which are invariant under a group of isometries of the ambient space. In the case of translational or rotational invariance, they have been completely classified in \cite{Smoczyk} and \cite{Altschuler,Clutterbuck}. For the first case, the only non-trivial example was the Grim Reaper curve in the Euclidean plane. In higher dimensions, Grim Reapers are essentially the product of a Grim Reaper curve and a 2-codimensional hyperplane orthogonal to it.

Recently, generalisations of translating solitons to other ambient spaces have emerged. In \cite{deliramartin} the authors consider the general situation where the ambient manifold is a Riemannian product $(P\times \mathbb{R}, g_P+dt^2)$ of a Riemannian manifold with the real line. An obvious translating direction is given by the parallel vector field $\partial_t$, and one can naturally consider graphical solitons over $P$. Generalising this idea to Lorentzian products, the last two authors gave in \cite{LOSemiriemannian} new examples of translators in Minkowski space. In particular, they completely classified those translating solitons which are rotationally symmetric or, in other words, those invariant by the cohomogeneity-one action of the special orthogonal and orthochronal Lie groups. To do so, they use the general fact that the existence of such a cohomogeneity-one action ensures the manifold to admit a (pseudo-)Riemannian submersion to a one-dimensional base manifold, hence reducing the PDE to an ODE.

Going one step further, in \cite{deliramartin}, the authors actually consider a definition of translating solitons in the direction of an arbitrary vector field, although these solutions might not be solutions to the mean curvature flow in the classical sense. In this context, solutions associated with conformal Killing vector fields are obvious examples of study.

Our aim is to go further in this study, inspired by the translating solitons in a classical sense, but in a larger class of Lorentzian manifolds, using similar techniques to the ones in \cite{LOSemiriemannian,ALO25}. Instead of a (pseudo-)Riemannian product, we consider the well-established family of Generalised Robertson-Walker (GRW) spacetimes $M=N\times I$, with warped product metric $b(t) g_N-dt^2$, for $(N,g_N)$ a Riemannian manifold and $b:I\to (0,+\infty)$, with $n=\dim N\ge 1$, so $\dim M=n+1\ge 2$. Introduced in \cite{alias_romero_sanchez}, these spaces are of particular interest in Physics,  where they serve to construct famous models of the universe, including among others homogeneous, isotropic and expanding or contracting cases.  The vector field $b \, \partial_t$ is obviously a conformal Killing vector field, and we can consider translators in its direction. We also point out that  in \cite{delira} the authors study long-time existence of solutions to the mean curvature flow in another family of GRW spacetimes.

This paper is structured as follows: In Section 2, we recall some preliminary definitions and explain the setup. In Section 3, GRW spacetimes are introduced. We consider a function $u$ defined on an open subset of the base manifold $M$ and, by using the flow of $\partial_t$, we construct graphical hypersurfaces in our GRW spacetime. We ask this hypersurface to satisfy the equation
\begin{equation}\label{theorigin} (b \, \partial_t)^{\perp} = \vec{H} \, ,
\end{equation}
where $\vec{H}$ is the mean curvature vector of the hypersurface and $(b \, \partial_t)^{\perp}$ is the orthogonal projection of $b \, \partial_t$ to the normal bundle of the hypersurface. This is a new notion of translator. The corresponding PDE that $u$ must satisfy is obtained in Proposition \ref{prop:pde2}.

To make the theory meaningful, we then turn to obtaining some examples of this new notion of translator. As it is very common for this type of problem, we endow the space $N$ with a symmetry given by a codimension-$1$ Lie group action. As a consequence, the PDE reduces to an ODE, which we obtain in all generality in Proposition \ref{prop:finalode}. As a first example, we focus on the analogue of the famous classical Grim Reaper family within this setting. This means we take $N= \mathbb{R}^{n+1}$ endowed with the action of $\mathbb{R}^n$ by translation of the first $n$ coordinates.

We find three families of warping functions $b$ for which these translators exist. We point out that in \cite{delira}, the authors needed the Null Convergence Condition in their study. In contrast, we are exhibiting some examples where this condition does not hold.

In Section \ref{section:grimreapers}, we classify the Grim Reapers in their corresponding GRW spacetimes -- \textit{vid.} Theorem \ref{thm:maintheorem}. These are the first families of examples of our new notion of translator. We anticipate that these methods will yield numerous additional examples in subsequent studies, contributing to the further development and understanding of this new framework.

\section{Preliminaries}\label{section:preliminaries}


Let $\left(M,g_M\right)$ and $\left(N^n,g_N\right)$ be pseudo-Riemannian manifolds. Let $\varepsilon>0$. A smooth family of smooth embeddings $\left\{F_t \colon \left(N,g_N\right) \to \left(M,g_M\right) \colon t \in (-\varepsilon, \varepsilon)\right\}$ is said to be a solution to the mean curvature flow if, and only if, for all $t\in (-\varepsilon, \varepsilon)$, the pullback $F_t^* g_M$ is non-degenerate and
\begin{equation*}
  \left(\frac{\partial F_t}{\partial t}\right)^{\perp} = \vec{H}_t \, ,
\end{equation*}
where $\vec{H}_t$ is the mean curvature vector of $F_t$. Recall that, if $\left( e_1 , \dots , e_n \right)$ is a local $(F_{t}^{*}g_M)$-orthonormal frame of $N$, with $g\left(\left( F_t \right)_* e_i,\left( F_t \right)_* e_j\right)=\varepsilon_i \delta_{ij}$, for $i,j=1,\dots,n$, then $\vec{H_t}$ is defined as
\begin{equation}\label{eq:def_h}
\vec{H}_t = \sum_{i=1}^{n} \varepsilon_i \left( \nabla^{M}_{\left(F_t\right)_* e_i} \left(F_t\right)_*e_i \right)^{\perp} \, ,
\end{equation}
 where $\nabla^M$ is the Levi-Civita connection of $(M,g_M)$ -- along the paper, we will denote by $\nabla^P$ the Levi-Civita connection of a  given pseudo-Riemannian manifold $(P,g_P)$ -- and
$\perp$ denotes orthogonal projection to the normal bundle of $F_t$. Now suppose we have a smooth family of conformal maps $\left\{C_t \colon \left(M,g_M\right) \to \left(M,g_M\right)\colon t \in (-\varepsilon,\varepsilon)\right\}$ and a fixed pseudo-Riemannian immersion $F_0 \colon \left(N,g_N\right) \to \left(M,g_M\right)$. We say that $F_0$ defines a \textbf{soliton} with respect to the family $\{C_t \colon t \in (-\varepsilon,\varepsilon) \}$ if, and only if, the family $\left\{F_t = C_t \circ F_0 \colon t \in (-\varepsilon,\varepsilon) \right\}$ is a solution to the mean curvature flow.

  Suppose that $M = N \times I$, with $I \subseteq \mathbb{R}$ an open interval, and that $g_M (x,t) = \varphi (x,t) g_N (x) + \psi (x,t) dt^2$ is a pseudo-Riemannian metric on $M$, for some smooth functions $\varphi,\psi \colon N \times I \to \mathbb{R}$. These spaces provide us with:

  \begin{enumerate}
    \item a natural direction in which translate, namely the $I$-direction;
    \item a natural class of candidates to solitons, namely graphs of functions $N \to I$.
  \end{enumerate}

  For some choices of the functions $\varphi$ and $\psi$, one can find a conformal Killing vector field on $M$, whose flow will provide us with a one-parameter family of conformal maps with respect to which one can try to find functions $N \to I$ whose graph is a soliton. In this sense, we are generalising the classical notion of translating soliton \cite{deliramartin}.

  In \cite{LOSemiriemannian}, Lawn and Ortega studied the case where $\varphi$ is constant equal to $1$ and $\psi$ is also constant equal to $\pm 1$. We will now look at other choices of $\varphi$ and $\psi$ which have interest in other areas of Mathematics and Physics.

\section{Generalised Robertson-Walker Geometries}  \label{section:rw}


We now turn our attention to Generalised Robertson-Walker geometries, which in our setting correspond to taking $\psi$ constant equal to $-1$ and $\varphi = \tilde{\varphi} \circ \pi_2$, where $\tilde{\varphi} \colon I \to (0,\infty)$ is smooth and $\pi_2$ denotes the projection to the second factor. These spaces were introduced by Al\'ias, Romero and S\'anchez in \cite{alias_romero_sanchez}, and have been widely studied \cite{Mant,Lawn_2015}.

Let $\left( N, g_N \right)$ be an $n$-dimensional Riemannian manifold, $I \subseteq \mathbb{R}$ an open interval and $b \colon I \to \left(0,+\infty\right)$ a smooth function. A Generalised Robertson-Walker spacetime (GRW) with base $I$, fibre $\left( N, g_N \right)$ and warping function $b$ is the product manifold $M=N \times I$ endowed with the metric
\begin{equation*}
    g = g_{M} = b(t)^2g_N - dt^2 \, ,
\end{equation*}
where $t$ is the coordinate in $I$. We will denote this space by $\mathcal{RW} \left( N,g_N,I,b \right)$.

When $N$ is a real spaceform, these spaces are used in Physics to model a homogeneous, isotropic and expanding or contracting universe, \textit{vid.} \cite{Mant} and references therein.

Following the previously discussed ideas, these spaces come naturally equipped with a conformal Killing vector field.

\begin{lemma} \label{lemma:conformal}
The vector field $X \coloneqq b(t)\partial_t$ is a conformal Killing vector field on $\left(M,g \right) = \mathcal{RW} \left(N,g_N,I,b\right)$. More specifically,
\begin{equation*}
\pushQED{\qed}
\mathcal{L}_X g = 2 b'g . \qedhere
\popQED
\end{equation*}
\end{lemma}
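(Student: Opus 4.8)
The plan is to compute $\mathcal{L}_X g$ through the Levi-Civita connection $\nabla^M$, using the standard identity
\[
(\mathcal{L}_X g)(Y,Z) = g(\nabla^M_Y X, Z) + g(\nabla^M_Z X, Y),
\]
which holds because $\nabla^M$ is metric and torsion-free. The entire statement then reduces to computing the single covariant derivative $\nabla^M_Y X$ and showing that $\nabla^M_Y X = b'(t)\, Y$ for every vector field $Y$. Granting this, both summands above equal $b'\, g(Y,Z)$, and we obtain $\mathcal{L}_X g = 2b'\, g$ immediately.

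To compute $\nabla^M_Y X$ I would exploit the warped-product structure: $\mathcal{RW}(N,g_N,I,b)$ is the warped product $I \times_b N$ with one-dimensional Lorentzian base $(I,-dt^2)$, fibre $(N,g_N)$, and warping function $b$. Since $\nabla^M_Y X$ is tensorial in $Y$, it suffices to evaluate it on the two natural types of field: the base field $\partial_t$ and the lifts $V$ of vector fields on $N$, which satisfy $[\partial_t,V]=0$ and $V(b)=0$ because $b$ depends only on $t$. Writing $X = b\,\partial_t$ and using the Leibniz rule I would reduce the computation to $\nabla^M_{\partial_t} X = (\partial_t b)\partial_t + b\,\nabla^M_{\partial_t}\partial_t$ and $\nabla^M_V X = (Vb)\partial_t + b\,\nabla^M_V \partial_t$, and then apply the warped-product (O'Neill) connection formulas.

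For the first, the base is one-dimensional with the constant-coefficient metric $-dt^2$, so $\nabla^M_{\partial_t}\partial_t = 0$, giving $\nabla^M_{\partial_t} X = b'\partial_t$. For the second, the mixed warped-product rule $\nabla^M_V \partial_t = \nabla^M_{\partial_t} V = (\partial_t b/b)\,V = (b'/b)\,V$ together with $V(b)=0$ gives $\nabla^M_V X = b'\,V$. In both cases $\nabla^M_Y X = b'\,Y$, which is exactly the condition that $X$ be a closed (concircular) conformal field. Substituting into the Lie-derivative identity yields $\mathcal{L}_X g = 2b'\,g$ and closes the argument.

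The only genuine obstacle is keeping the warped-product conventions and the Lorentzian signs straight: one must check that $I$ is the base and $N$ the fibre, verify $\nabla^M_{\partial_t}\partial_t=0$, and apply the mixed rule $\nabla^M_V\partial_t = (b'/b)V$ with the correct warping factor. An equivalent route that avoids invoking O'Neill's formulas is to observe that $X = \grad h$ for $h(t) = -\int b\,dt$ and compute $\nabla^M_Y X = \mathrm{Hess}\,h(Y,\cdot)^{\sharp}$ directly; either way the decisive output is the pointwise identity $\nabla^M_Y X = b'\,Y$.
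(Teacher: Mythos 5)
Your proof is correct and follows essentially the same route as the paper's: both rest on the identity $(\mathcal{L}_X g)(Y,Z) = g(\nabla^M_Y X, Z) + g(\nabla^M_Z X, Y)$ combined with O'Neill's warped-product connection formulas to get $\nabla^M_Y X = b'Y$. Your version is marginally tidier in stating that identity uniformly for all $Y$ rather than splitting into the cases $Y = \partial_t$ and $Y \perp \partial_t$, but the substance is identical.
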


Lemma \ref{lemma:conformal} provides us with a one-parameter family of conformal diffeomorphisms between open subsets of $M$, namely the flow $\Phi_s$ of the vector field $X=b(t) \partial_t$. Note that, for all $x \in N$ and $t$ sufficiently small,

\begin{equation*}
\Phi_t (x,s) = (x,A(s,t)) ,
\end{equation*}
where $A(s,t)$ satisfies

\begin{equation} \label{eq:A}
     \left\{
\def\arraystretch{1.5}
\begin{array}{ll}
      A(s,0) = s ,  \\
      \frac{\partial A}{\partial t}(s,t) = b\left( A(s,t) \right) . \\
\end{array}
\right.
\end{equation}

We will restrict our attention to hypersurfaces of $M$ which are graphs of functions $N \to I$, as this is the generic case. Let $\Omega \subseteq N$ be open, precompact and connected, and let $u \in C^2(\Omega,I)$. Then, there exists $\varepsilon > 0$ such that the flow $\Phi_t$ of $X$ is defined in $\Omega$ for all $t \in (-\varepsilon, \varepsilon)$. For $t\in (-\varepsilon, \varepsilon)$, consider
\begin{align*}
    \begin{split}
F_t \colon \Omega &\rightarrow N \times I = M\\
    x &\mapsto \Phi_t\left(x,u(x) \right) = (x,u_t(x))
    \end{split} \hspace{.2cm} ,
\end{align*}
where $u_t (x) \coloneqq A\left( u(x) , t \right)$. $F_t$ is an embedding for every $t$, as it is a graph map. Suppose further that $F_0^*g$ is non-degenerate -- and hence $F_t^*g$, for all sufficiently small $t$. In this setting, we can establish an important definition:

\begin{deff}
    We say that $F \coloneqq F_0$ is an $X$-soliton if, and only if, for every $x \in \Omega$ and $t \in (-\varepsilon, \varepsilon)$,
\begin{equation} \label{eq:soliton_rw}
    \left( b \circ u_t \right) (x) \, \partial_{t}^{\perp} (x,u_t(x)) = \vec{H_t} (x, u_t(x)) \, ,
\end{equation}
where $\vec{H_t}$ is the mean curvature of the pseudo-Riemannian immersion $F_t$. As in \cite{deliramartin}, we say that $F$ defines a \textit{pointwise} $X$-soliton if \eqref{eq:soliton_rw} is satisfied for $t=0$.
\end{deff}

In \cite{delira}, de Lira and Roing considered some natural conditions for the longtime existence of the mean curvature flow in a class of GRW spacetimes. In Section \ref{section:types} we will introduce another class of such spacetimes which 
does not satisfy all their hypotheses.

\subsection{Characterising PDE}

In this section we will obtain the PDE characterising the property of being a (pointwise) $X$-soliton in a GRW spacetime.

\begin{rem}\label{rem:simplicity}
    For the moment, for simplicity, we will work only with the graph map $F$ of a function $u \in \mathcal{C}^2(\Omega,I)$, where $\Omega \subseteq N$ is open, precompact and connected, to find a more explicit expression for equation \eqref{eq:soliton_rw}. We will find a PDE for $u$ characterising the property of $F$ being a pointwise $X$-soliton. The $X$-soliton case will result from substituting $u$ with $u_t$ and $F$ with $F_t$, and imposing that the PDE be satisfied for all $t$ in a suitable neighbourhood of $0$.
\end{rem}
Note that, under the usual identifications, if $X \in \Gamma \left(\Tan N \right)$, then
\begin{equation*}
F_*X = \left(X,0\right) + g_N\left( \nabla u , X \right)\partial_t = \left(X,g_N\left( \nabla u , X \right) \right)\, .
\end{equation*}
Also, the vector field
\begin{equation*}
    \frac{1}{b(u(x))^2}\left(\nabla u (x) , 0 \right) + \partial_t
\end{equation*}
is normal to $F$.

\begin{lemma}
Under the previous notation and hypotheses, for all $x$ in $\Omega$ the quantity
\begin{equation*}
\hat{\varepsilon} (x) := \sign\left( \frac{\abs{\nabla u (x)}^2}{b(u(x))^2} -1 \right)\in\{\pm 1\} \, ,
\end{equation*}
and it is constant.
\end{lemma}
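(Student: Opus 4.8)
The plan is to recognise that $\varepsilon'(x)$ is precisely the sign of the squared norm of the normal vector field exhibited just before the statement, and then to combine the non-degeneracy hypothesis with the connectedness of $\Omega$.

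First I would compute $g(\nu,\nu)$ for the stated normal field $\nu := \tfrac{1}{b(u(x))^2}\left(\nabla u(x),0\right) + \partial_t$. Using the warped splitting $g = b(t)^2 g_N - dt^2$, so that $g\left((V,0),(W,0)\right) = b^2 g_N(V,W)$, $g\left((V,0),\partial_t\right) = 0$ and $g(\partial_t,\partial_t) = -1$, a short calculation gives
\[
g(\nu,\nu) = \frac{1}{b(u(x))^4}\, b(u(x))^2\, \abs{\nabla u(x)}^2 - 1 = \frac{\abs{\nabla u(x)}^2}{b(u(x))^2} - 1 ,
\]
so that $\varepsilon'(x) = \sign\, g\left(\nu(x),\nu(x)\right)$. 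This is the one identification that makes everything else routine.

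Next I would invoke that $F = F_0$ is assumed to be a pseudo-Riemannian immersion, i.e. $F^*g$ is non-degenerate. For a hypersurface this is equivalent to the normal line spanned by $\nu(x)$ being non-degenerate, which means exactly $g\left(\nu(x),\nu(x)\right) \neq 0$ for every $x \in \Omega$. Hence $\varepsilon'(x) = \sign\, g\left(\nu(x),\nu(x)\right) \in \{\pm 1\}$ is well-defined at each point, and since $F^*g$ stays non-degenerate on all of $\Omega$ this holds uniformly in $x$.

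Finally, for constancy: because $u \in \mathcal{C}^2(\Omega,I)$ and $b$ is smooth and positive, the map $x \mapsto g\left(\nu(x),\nu(x)\right) = \abs{\nabla u(x)}^2/b(u(x))^2 - 1$ is continuous and, by the previous step, nowhere vanishing on $\Omega$. Its sign $\varepsilon'$ is therefore a continuous $\{\pm 1\}$-valued function on the connected set $\Omega$, and so must be constant. I do not expect a genuine obstacle here; the only point requiring care is the translation of ``non-degenerate immersion'' into ``$g(\nu,\nu)\neq 0$'', after which the standard connectedness argument closes the proof.
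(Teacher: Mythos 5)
Your proof is correct, and it reaches the same conclusion as the paper by a mildly different route in the key step. The identification $\varepsilon'(x) = \sign\, g\left(\nu(x),\nu(x)\right)$ is right (the paper confirms it implicitly just after the lemma, where the normalised normal is stated to satisfy $g(\nu,\nu)=\varepsilon'$), and the linear-algebra fact you invoke --- a hyperplane of a non-degenerate inner product space is non-degenerate if and only if its orthogonal line is --- is standard. The paper instead works on the tangential side: arguing by contradiction, it supposes $\abs{\nabla u (x)}^2 = b(u(x))^2$ at some point and computes that then $\left( F^*g \right)\left( \nabla u(x), Y\right) = \left( b(u(x))^2 - \abs{\nabla u(x)}^2\right) g_N\left( \nabla u(x), Y\right) = 0$ for every $Y$, so $\nabla u(x)$ lies in the radical of $F^*g$; non-degeneracy forces $\nabla u(x)=0$, contradicting $\abs{\nabla u(x)}^2 = b(u(x))^2>0$. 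That computation is precisely the tangential counterpart of your normal-line observation, so the two arguments are duals of one another: yours is more conceptual and outsources the work to the subspace/orthogonal-complement duality, while the paper's is self-contained and explicit about which vector witnesses the degeneracy. Both arguments then finish identically, by continuity of $x \mapsto \abs{\nabla u(x)}^2/b(u(x))^2 - 1$ and connectedness of $\Omega$ (the paper phrases this as ``if the sign were not constant, the quantity would vanish somewhere,'' i.e.\ the intermediate value theorem, which is exactly your closing step). No gaps.
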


\begin{proof}

Indeed, suppose it is not. Then, as $\Omega$ is connected, there exists $x \in \Omega$ such that
\begin{equation} \label{absurd}
     \frac{\abs{\nabla u (x)}^2}{b(u(x))^2} -1 = 0 .
\end{equation}

In such case, we would have for every tangent vector $Y \in \Tan_x N$ that
\begin{align*}
    \left( F^* g \right) \left( \nabla u (x) , Y \right) &= g \left( F_* \nabla u (x) , F_* Y \right) \\
    &= g \left( \left(\nabla u (x),0\right) + \abs{\nabla u (x)}^2 \partial_t , Y + g_N \left(\nabla u (x) , Y \right)\partial_t \right) \\
    &= b(u(x))^2 g_N \left( \nabla u (x), Y \right) - \abs{\nabla u (x)}^2 g_N \left( \nabla u (x), Y \right) \\
    &= 0 .
\end{align*}

But, by hypothesis, $F^*g$ is non-degenerate, so this would imply that $\nabla u (x) = 0$, which in turn would imply that $\abs{\nabla u (x)}^2 = 0$, which contradicts \eqref{absurd}.
\end{proof}

So the vector field
\begin{equation} \label{eq:definitionnu}
\nu (x)= \frac{1}{W(x)}\left( \frac{1}{b(u(x))^2}\left(\nabla u (x) , 0\right) + \partial_t \right) , \, x \in \Omega \, ,
\end{equation}
where
\begin{equation} \label{eq:definitionW}
    W (x) = \sqrt{\hat{\varepsilon} (x) \left( \frac{\abs{\nabla u (x)}^2}{b(u(x))^2} -1 \right)} \, ,
\end{equation}
is a normal vector field to $F$ with $g(\nu,\nu) = \hat{\varepsilon}$.

In \cite{rw_curvature}, Hernandes de Lima computed the mean curvature vector of $F$, obtaining
\begin{equation}\label{eq:lima}
    g \left( \vec{H}, \nu \right) = -\div \left( \frac{\nabla u}{(b\circ u)^2 W}\right) - \frac{(b'\circ u)}{(b\circ u)W}\left( n + \frac{\abs{\nabla u}^2}{(b\circ u)^2}\right) ,
\end{equation}
where $n = \dim(N)$.

\begin{thmm} \label{thm:generalpde}
    Under the previous hypotheses and notation, $F$ is a pointwise $X$-soliton if, and only if, $u$ satisfies the following partial differential equation:
    \begin{equation} \label{eq:pde1}
        \div \left( \frac{\nabla u}{\left(b \circ u\right) \, W}\right) = \frac{1}{W}\left[ \left(b \circ u\right)^2 - n \left(b' \circ u \right) \right].
    \end{equation}
    \end{thmm}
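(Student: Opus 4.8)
The plan is to exploit the fact that both sides of the soliton equation \eqref{eq:soliton_rw} at $t=0$ are sections of the normal bundle of $F$, which is one-dimensional and spanned by the non-null field $\nu$ from \eqref{eq:definitionnu}. Since $g(\nu,\nu)=\varepsilon'\neq 0$, any normal vector is completely determined by its $g$-pairing with $\nu$; hence the vector identity $(b\circ u)\,\partial_t^{\perp}=\vec{H}$ is equivalent to the single scalar equation $(b\circ u)\,g(\partial_t^{\perp},\nu)=g(\vec{H},\nu)$. This reduces everything to computing the left-hand side explicitly and combining it with the formula \eqref{eq:lima} for $g(\vec{H},\nu)$ that is already available.

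First I would compute $g(\partial_t^{\perp},\nu)$. Because $\partial_t-\partial_t^{\perp}$ is tangent to $F$ and therefore $g$-orthogonal to $\nu$, we have $g(\partial_t^{\perp},\nu)=g(\partial_t,\nu)$. Reading $\nu$ off from \eqref{eq:definitionnu}, its horizontal part $(\nabla u,0)$ is $g$-orthogonal to the vertical $\partial_t$, while $g(\partial_t,\partial_t)=-1$, so $g(\partial_t,\nu)=-1/W$. Substituting \eqref{eq:lima}, the scalar soliton equation then reads
\begin{equation*}
-\frac{b\circ u}{W}=-\div\left(\frac{\nabla u}{(b\circ u)^2 W}\right)-\frac{b'\circ u}{(b\circ u)W}\left(n+\frac{\abs{\nabla u}^2}{(b\circ u)^2}\right).
\end{equation*}

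The key step, and the only place where any genuine computation is needed, is to reconcile the factor $(b\circ u)^2$ inside the divergence of \eqref{eq:lima} with the single power $(b\circ u)$ appearing in the target \eqref{eq:pde1}. I would write $\frac{\nabla u}{(b\circ u)^2 W}=\frac{1}{b\circ u}\cdot\frac{\nabla u}{(b\circ u)W}$ and apply the Leibniz rule $\div(fV)=f\,\div V+g_N(\nabla f,V)$ with $f=1/(b\circ u)$. Using $\nabla(b\circ u)=(b'\circ u)\nabla u$, the correction term is $-\frac{(b'\circ u)\abs{\nabla u}^2}{(b\circ u)^3 W}$, which is precisely what cancels the $\abs{\nabla u}^2$ contribution carried over from \eqref{eq:lima}. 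After this cancellation the equation collapses to $-\frac{b\circ u}{W}=-\frac{1}{b\circ u}\div\big(\frac{\nabla u}{(b\circ u)W}\big)-\frac{n(b'\circ u)}{(b\circ u)W}$, and multiplying through by $-(b\circ u)$ delivers \eqref{eq:pde1}. I expect the main obstacle to be purely bookkeeping: keeping the powers of $b\circ u$ and the divergence-correction term aligned so that the gradient-squared terms cancel exactly, leaving the clean form stated in the theorem.
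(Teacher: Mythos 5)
Your proposal is correct and follows essentially the same route as the paper: both reduce the vector equation to the scalar condition $g(\vec{H},\nu)=(b\circ u)\,g(\partial_t,\nu)=-(b\circ u)/W$ and then apply the Leibniz rule to $\div\bigl(\tfrac{1}{b\circ u}\cdot\tfrac{\nabla u}{(b\circ u)W}\bigr)$ so that the gradient-squared term from \eqref{eq:lima} cancels. No gaps; the bookkeeping you anticipate is exactly the computation carried out in the paper's proof.
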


    \begin{proof}

    $\vec{H} = (b \circ u) \partial_t^{\perp} \text{  if and only if  } g \left( \vec{H},\nu \right) = (b \circ u) g \left( \partial_t^{\perp} , \nu \right) = (b \circ u) g \left( \partial_t , \nu \right) = -(b\circ u) / W$, as $\left( \nabla u , 0 \right) \perp \partial_t$. But, using equation \eqref{eq:lima},
    \begin{align*}
        &g \left( \vec{H}, \nu \right) = -\div \left( \frac{\nabla u}{(b\circ u)^2 W}\right) - \frac{(b'\circ u)}{(b\circ u)W}\left( n + \frac{\abs{\nabla u}^2}{(b\circ u)^2}\right) \\
        &= \frac{-1}{(b \circ u)} \div \left( \frac{\nabla u}{(b\circ u) W}\right) - g_N \left( \nabla \left( \frac{1}{(b \circ u)} \right) , \frac{\nabla u}{(b \circ u) W} \right) - \\
        & - \frac{(b'\circ u)}{(b\circ u)W}\left( n + \frac{1}{(b\circ u)^2}\abs{\nabla u}^2\right) \\
        &= \frac{-1}{(b \circ u)} \left( \div \left( \frac{\nabla u}{(b \circ u) W} \right) + \frac{n (b' \circ u)}{W}\right) ,
    \end{align*}
    which yields the result.
    \end{proof}

    As explained in Remark \ref{rem:simplicity}, we obtain the characterisation of $X$-solitons from pointwise $X$-solitons by substituting $u_t$ for $u$ in \eqref{eq:pde1}, obtaining the following:

    \begin{prop} \label{prop:pde2}
         The family $\{F_t \colon t \in (-\varepsilon,\varepsilon)\}$ defines an $X$-soliton if, and only if, for each $x \in \Omega$ and $t \in (-\varepsilon,\varepsilon)$:
        \begin{equation} \label{eq:pde2}
            \div \left( \frac{\nabla u}{\left(b \circ u\right) \, W}\right) (x) = \frac{1}{W(x)}\left[ \left(b(u_t(x))\right)^2 - n \left(b'(u_t(x)) \right) \right] \, .
            \end{equation}
        \end{prop}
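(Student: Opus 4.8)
The plan is to reduce everything to Theorem~\ref{thm:generalpde} via the substitution flagged in Remark~\ref{rem:simplicity}. For each fixed $t \in (-\varepsilon,\varepsilon)$, the map $F_t$ is precisely the graph map of the $C^2$ function $u_t = A(u(\cdot),t)\colon \Omega \to I$, so Theorem~\ref{thm:generalpde} applies verbatim with $u$ replaced by $u_t$: the identity $(b\circ u_t)\,\partial_t^\perp = \vec H_t$ along $F_t$ holds if and only if
\begin{equation*}
\div\left( \frac{\nabla u_t}{(b\circ u_t)\,W_t}\right) = \frac{1}{W_t}\left[ (b\circ u_t)^2 - n\,(b'\circ u_t)\right],
\end{equation*}
where $W_t$ is the function \eqref{eq:definitionW} formed from $u_t$. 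By definition $\{F_t\}$ is an $X$-soliton exactly when this holds for every $t$ and every $x$, so the only remaining task is to show that the differential operator on the left and the factor $1/W_t$ coincide with their counterparts in \eqref{eq:pde2}, i.e. that replacing $u$ by $u_t$ leaves these quantities unchanged.

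The crux is a single computation on the flow function $A$. First I would differentiate the defining relations \eqref{eq:A} with respect to $s$: writing $\alpha(t) := \tfrac{\partial A}{\partial s}(s,t)$ and $\beta(t):= b(A(s,t))$, both satisfy the same linear ODE $y' = b'(A(s,t))\,y$ (prime denoting $\partial_t$), with $\alpha(0)=1$ and $\beta(0)=b(s)$. Uniqueness then forces $\beta = b(s)\,\alpha$, that is,
\begin{equation*}
\frac{\partial A}{\partial s}(u(x),t) = \frac{b(u_t(x))}{b(u(x))}.
\end{equation*}
Since $u_t = A(u(\cdot),t)$ gives $\nabla u_t = \tfrac{\partial A}{\partial s}(u,t)\,\nabla u$, this yields the clean relation $\nabla u_t = \tfrac{b\circ u_t}{b\circ u}\,\nabla u$.

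From here the conclusion is immediate. This identity makes $\tfrac{\abs{\nabla u_t}^2}{(b\circ u_t)^2} = \tfrac{\abs{\nabla u}^2}{(b\circ u)^2}$, so $\varepsilon'$ is unchanged and $W_t = W$; and the same relation collapses $\tfrac{\nabla u_t}{(b\circ u_t)\,W_t}$ to $\tfrac{\nabla u}{(b\circ u)\,W}$, so its divergence agrees with the left-hand side of \eqref{eq:pde2}. Substituting $1/W_t = 1/W$ into the right-hand side gives exactly the stated equation, which proves the equivalence.

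I expect the only genuine obstacle to be the invariance $W_t = W$ and of the whole vector field $\nabla u/((b\circ u)\,W)$: this is what makes the naive substitution produce a PDE whose differential operator does not depend on $t$, and it hinges entirely on the flow identity $\tfrac{\partial A}{\partial s} = (b\circ u_t)/(b\circ u)$. Everything else is the direct application of Theorem~\ref{thm:generalpde} to each $u_t$, together with the observation that the $X$-soliton condition is simply the family version of the pointwise one.
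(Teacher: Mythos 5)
Your proof is correct and takes essentially the same route as the paper's: both reduce the statement to Theorem~\ref{thm:generalpde} applied to $u_t$ and hinge on the identity $\tfrac{\partial A}{\partial s}(s,t)=b(A(s,t))/b(s)$, whence $\nabla u_t=\tfrac{b\circ u_t}{b\circ u}\nabla u$ and $W_t=W$. The only (harmless) difference is that you obtain $\tfrac{\partial A}{\partial s}$ by uniqueness for the linear variational ODE rather than by the paper's Leibniz-rule differentiation of $\int_{A(s,0)}^{A(s,t)}\frac{dy}{b(y)}=t$, and you explicitly record the invariance of the full vector field $\nabla u/((b\circ u)W)$, which the paper leaves as ``a straightforward computation.''
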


        \begin{proof}
        Note that, by the chain rule,
        \begin{equation*}
        \nabla u_t = \nabla \left( A \left( u(x),t \right) \right) = \partial_1 A \left( u(x),t \right) \nabla u \, ,
        \end{equation*}
        where $\partial_1$ denotes partial derivative with respect to the first variable. Let us compute $\partial_1 A \left( u(x),t \right)$. By definition, $A$ satisfies \eqref{eq:A}. Hence, for every $t,s$,
        \begin{equation*}
        \int_{A(s,0)}^{A(s,t)} \frac{1}{b(y)}dy = \int_{0}^{t} dr = t \, .
        \end{equation*}
        Thus, by Leibniz's integral rule,
        \begin{align*}
        0 &= \frac{\partial}{\partial s} t = \frac{\partial}{\partial s} \left( \int_{A(s,0)}^{A(s,t)} \frac{1}{b(y)}dy  \right) = \frac{1}{b\left( A(s,t) \right)} \frac{\partial A}{\partial s} (s,t) - \frac{1}{b\left( A(s,0) \right)} \frac{\partial A}{\partial s}(s,0) \, ,
        \end{align*}
        and hence
        \begin{equation*}
            \frac{\partial A}{\partial s}(s,t) = \frac{b\left( A(s,t) \right)}{b(s)} .
        \end{equation*}
        So we have that
        \begin{equation*}
        \nabla u_t = \frac{b \circ u_t}{b \circ u} \nabla u \, .
        \end{equation*}
        Using this, a straightforward computation shows that $W_t = W$ for every $t$, and the result follows.
        \end{proof}

        \begin{rem}
        If $b = 1$, equation \eqref{eq:pde2} coincides with the equation found by Lawn and Ortega in \cite{LOSemiriemannian}.
        \end{rem}

\subsection{Reduction to an ODE}\label{section:submersions}

We refer to \cite{Besse} and \cite{oneillpaper} for a complete treatment of pseudo-Riemannian submersions, but we recall some basic facts here. Let $\left( N, g_N \right)$ and $\left( B, g_B \right)$ be pseudo-Riemannian manifolds. A submersion $\pi \colon N \to B$ is said to be a pseudo-Riemannian submersion if it is surjective, the fibres are pseudo-Riemannian submanifolds, and its derivative $\pi_*$ preserves lengths of horizontal vectors. If $Y \in \Gamma(\Tan B)$, we denote by $Y^h$ its horizontal lift. If $Z \in \Gamma(\Tan N)$, we denote by $\mathcal{H}(Z)$ (resp. $\mathcal{V}(Z)$) its horizontal (resp. vertical) component.

Note that, if a submersion $\pi \colon \left( N, g_N \right) \to \left( B, g_B \right)$ is pseudo-Riemannian, then the submersion $\tilde{\pi} \colon \left( N \times I, g_{N \times I} = b(t)^2 g_N - dt^2 \right) \to \left( B \times I, g_{B \times I} = b(t)^2 g_B - dt^2 \right)$ defined by $(x,t) \mapsto (\pi(x),t)$ is also pseudo-Riemannian. And the $\tilde{\pi}$-horizontal lift of $\partial_t$ is $\partial_t$.

Now take $\left( e_1 , \dots , e_n \right)$ a local $g_N$-orthonormal frame of $N$, with $g_N (e_i,e_j) = \varepsilon_i \delta_{ij}$, for $i,j = 1, \dots, n$, with $\varepsilon_i \in \{\pm1\}$ for each $i = 1, \dots, n$. We can take $e_1, \dots , e_k$ to be $\pi$-vertical and $e_{k+1}, \dots , e_n$ to be $\pi$-horizontal.

Let $U \subseteq B$ be an open subset of $B$ and let $f \in \mathcal{C}^2 \left(U,I\right)$. Define $u \coloneqq f \circ \pi$. Denote by $F$ and $\tilde{F}$ the graph maps of $f$ and $u$ respectively.

\[
    \begin{tikzcd}[row sep=.5cm, column sep=large]
        & \Omega = \pi^{-1}\left(U\right) \subseteq N \arrow{r}{\tilde{F} = id \times u} \arrow{dd}[swap]{\pi} \arrow{dl}{u} & N \times I \arrow{dd}{\tilde{\pi} = \pi \times id} \\
        I & &  \\
        & U \subseteq B \arrow{r}{F = id \times f} \arrow{ul}{f} & B \times I
        \end{tikzcd}
\]

\begin{lemma} \label{lemma:basics}
In the previous situation, we have the following:
\begin{enumerate}
    \item $W_{\tilde{F}} = W_F \circ \pi$.
    \item The $\tilde{\pi}$-horizontal lift of $\nu_{F}$ is $\nu_{\tilde{F}}$.
    \item For every $k+1\leq i,j \leq n$ the $\tilde{\pi}$-horizontal lift of $F_* \pi_* e_i$ is $\tilde{F}_* e_i$.
    \item For every $k+1 \leq i,j \leq n$,
    \begin{equation*}
    \tilde{\pi}_* \nabla^{N \times I}_{\tilde{F}_* e_i} \tilde{F}_* e_j = \nabla^{B \times I}_{F_* \pi_* e_i} F_* \pi_* e_j \, .
    \end{equation*}
\end{enumerate}
\end{lemma}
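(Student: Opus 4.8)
The plan is to reduce every statement to a single observation: the gradient of $u=f\circ\pi$ is the $\pi$-horizontal lift of the gradient of $f$. Indeed, for any $Z\in\Gamma(\Tan N)$ one has $g_N(\nabla u,Z)=du(Z)=df(\pi_* Z)=g_B(\nabla f,\pi_* Z)$; this vanishes whenever $Z$ is vertical, so $\nabla u$ is horizontal, and comparing inner products against horizontal vectors, where $\pi_*$ is an isometry onto the nondegenerate horizontal distribution, forces $\nabla u=(\nabla f)^h$. Since the horizontal lift preserves lengths, $\abs{\nabla u}^2=\abs{\nabla f}^2\circ\pi$, while trivially $b\circ u=(b\circ f)\circ\pi$. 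Part (1) is then immediate from the definition \eqref{eq:definitionW} of $W$: both $\varepsilon'$ and the radicand are composed with $\pi$, so $W_{\tilde F}=W_F\circ\pi$.

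For part (2) I would use that the $\tilde\pi$-horizontal lift is $\R$-linear, that the lift of a $B$-direction vector $(Y,0)$ is $(Y^h,0)$, and that the lift of $\partial_t$ is $\partial_t$, as recorded just before the lemma. Applying this to $\nu_F=\frac{1}{W_F}\bigl(\frac{1}{(b\circ f)^2}(\nabla f,0)+\partial_t\bigr)$, and using $\nabla u=(\nabla f)^h$, $b\circ u=(b\circ f)\circ\pi$ together with part (1), the horizontal lift of $\nu_F$ at $\tilde F(x)$ becomes exactly $\frac{1}{W_{\tilde F}}\bigl(\frac{1}{(b\circ u)^2}(\nabla u,0)+\partial_t\bigr)=\nu_{\tilde F}$. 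For part (3), I would decompose $F_*\pi_* e_i=(\pi_* e_i,0)+g_B(\nabla f,\pi_* e_i)\partial_t$; since $e_i$ is horizontal we have $(\pi_* e_i)^h=e_i$, and $g_B(\nabla f,\pi_* e_i)=g_N(\nabla u,e_i)$, so lifting term by term yields $(e_i,g_N(\nabla u,e_i))=\tilde F_* e_i$.

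The substantive point is part (4), which is O'Neill's fundamental equation for the submersion $\tilde\pi$. First I would arrange the frame so that $e_{k+1},\dots,e_n$ are \emph{basic}: taking a local orthonormal frame on $U\subseteq B$ and lifting it horizontally produces an orthonormal, projectable horizontal frame (lengths are preserved), which I combine with any vertical orthonormal frame $e_1,\dots,e_k$. By part (3), $\tilde F_* e_i$ and $\tilde F_* e_j$ are then the $\tilde\pi$-horizontal lifts of the basic fields $F_*\pi_* e_i$ and $F_*\pi_* e_j$ on $B\times I$, and these lifts are themselves basic on $N\times I$. O'Neill's formula for a pseudo-Riemannian submersion states that for basic horizontal fields the horizontal component of the total connection is the lift of the base connection, $\mathcal H\bigl(\nabla^{N\times I}_{\tilde F_* e_i}\tilde F_* e_j\bigr)=\bigl(\nabla^{B\times I}_{F_*\pi_* e_i}F_*\pi_* e_j\bigr)^h$, while the vertical component is an $A$-tensor term. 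Applying $\tilde\pi_*$ annihilates the vertical part and returns the horizontal lift to $\nabla^{B\times I}_{F_*\pi_* e_i}F_*\pi_* e_j$, which is the claim.

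The only genuine care needed is in part (4): one must justify replacing the vector fields \emph{along} the maps $F,\tilde F$ by honest basic vector fields on $B\times I$ and $N\times I$ before invoking O'Neill. This is legitimate because the covariant derivative of a field along an immersion depends only on its restriction to the image in the tangent direction $\tilde F_* e_i$, and the basic extensions agree with $\tilde F_* e_j$ there by part (3). Everything else is bookkeeping once $\nabla u=(\nabla f)^h$ is in hand.
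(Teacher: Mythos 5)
Your proposal is correct and follows essentially the same route as the paper: both hinge on establishing $\nabla(f\circ\pi)=(\nabla f)^h$ by comparing inner products, deducing parts (1)--(3) from it, and invoking O'Neill's identity $\mathcal H\bigl(\nabla^{\mathcal N}_{X^h}Y^h\bigr)=\bigl(\nabla^{\mathcal B}_X Y\bigr)^h$ for part (4). The extra care you take in part (4) about replacing fields along the immersions by basic extensions is a detail the paper leaves implicit, but it is not a different argument.
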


\begin{proof}
Note that, for every $k+1 \leq i \leq n$, \textit{i.e.}, when $e_i$ is horizontal, we have that
\begin{equation*}
    g_N \left(\left[\nabla f \right]^h, e_i \right) = \pi^* g_B \left(\nabla f, \pi_* e_i \right) = \pi^*df \left(\pi_* e_i\right) =g_N \left(\nabla \left(f \circ \pi \right) , e_i \right) .
\end{equation*}
Thus, $ \left[\nabla f\right]^h = \nabla (f \circ \pi) $. From this, we deduce the first three parts of this Lemma. For the last point, it is a general fact that, if $p \colon \mathcal{N} \to \mathcal{B}$ is a pseudo-Riemannian submersion, then for all $X,Y$ vector fields on $\mathcal{B}$
\begin{equation*}
    \mathcal{H} \left( \nabla^{\mathcal{N}}_{X^h} Y^h \right) = \left( \nabla^{\mathcal{B}}_{X} Y \right)^h ,
\end{equation*}
as can be found in \cite{oneill}.
\end{proof}

We now want to link the mean curvature of a graphical submanifold in $\mathcal{RW}\left(B,g_B,I,b \right)$ with the mean curvature vector of its pullback by $\tilde{\pi}$. The first step is to relate the corresponding metrics.

\begin{lemma} \label{lemma:metric}
Let $\tilde{\gamma} = \tilde{F}^*g_{N \times I}$ and $\gamma = F^*g_{B \times I}$. Then, for all $x \in \Omega \subseteq N$:
\begin{equation*}
    \tilde{\gamma}_{ij}(x) =
    \begin{dcases}
    b(f(\pi(x)))^2 \varepsilon_i \delta_{ij} & 1 \leq i \leq k \hspace{.5cm} \text{or} \hspace{.5cm} 1 \leq j \leq k\\
    \gamma_{ij}(\pi(x)) & k+1 \leq i,j \leq n
 \end{dcases} \, .
\end{equation*}

Consequently,
\begin{equation*}
    \tilde{\gamma}^{ij}(x) =
    \begin{dcases}
    \frac{1}{b(f(\pi(x)))^2} \varepsilon_i \delta^{ij} & 1 \leq i \leq k \hspace{.5cm} \text{or} \hspace{.5cm} 1 \leq j \leq k\\
    \gamma^{ij}(\pi(x)) & k+1 \leq i,j \leq n
 \end{dcases} \, .
\end{equation*}
\end{lemma}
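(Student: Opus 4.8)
The plan is to compute $\tilde\gamma$ directly from the graph-map pushforward formula recorded earlier and then read off the inverse from the block structure that emerges. Recall that for the graph map $\tilde F$ of $u$ one has $\tilde F_* X = (X, g_N(\nabla u, X))$ for $X \in \Gamma(\Tan N)$, and likewise $F_* Z = (Z, g_B(\nabla f, Z))$ for $Z \in \Gamma(\Tan B)$. Evaluating $g_{N\times I} = b(t)^2 g_N - dt^2$ at the point $\tilde F(x) = (x, u(x))$, whose $I$-coordinate is $u(x) = f(\pi(x))$, gives
\begin{equation*}
\tilde\gamma_{ij}(x) = b(u(x))^2 \varepsilon_i \delta_{ij} - (e_i u)(e_j u),
\end{equation*}
where $e_i u = g_N(\nabla u, e_i)$. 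The whole argument then reduces to analysing the term $(e_i u)(e_j u)$ by means of the identity $\nabla u = [\nabla f]^h$ established in Lemma \ref{lemma:basics}.

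First I would treat the cases in which at least one index is vertical. Since $\nabla u$ is horizontal and $e_1, \dots, e_k$ are vertical, $e_i u = g_N([\nabla f]^h, e_i) = 0$ whenever $i \leq k$; hence the product $(e_i u)(e_j u)$ vanishes as soon as $i \leq k$ or $j \leq k$, leaving $\tilde\gamma_{ij}(x) = b(f(\pi(x)))^2 \varepsilon_i \delta_{ij}$, as claimed. Next I would handle the horizontal block $k+1 \leq i, j \leq n$, comparing with $\gamma_{ij}(\pi(x)) = g_{B\times I}(F_*\pi_* e_i, F_* \pi_* e_j)$ read off in the $g_B$-orthonormal frame $(\pi_* e_{k+1}, \dots, \pi_* e_n)$ on $B$. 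Since $\pi_*$ preserves inner products of horizontal vectors (by polarisation of its length-preservation), $g_B(\pi_* e_i, \pi_* e_j) = \varepsilon_i \delta_{ij}$; and pulling back $df$ gives $g_B(\nabla f, \pi_* e_i) = df(\pi_* e_i) = d(f\circ\pi)(e_i) = e_i u$. Substituting these into the graph-metric formula on $B$ produces exactly $b(f(\pi(x)))^2 \varepsilon_i \delta_{ij} - (e_i u)(e_j u) = \tilde\gamma_{ij}(x)$, which establishes the first displayed formula.

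For the inverse, the point is that the formula just proved shows $(\tilde\gamma_{ij})$ is block diagonal: the vertical–horizontal cross terms all vanish, so the matrix splits into the diagonal vertical block $b^2\,\mathrm{diag}(\varepsilon_1, \dots, \varepsilon_k)$ and the horizontal block $(\gamma_{ij}(\pi(x)))_{k+1\leq i,j\leq n}$. I would then invert blockwise: the horizontal block inverts to $(\gamma^{ij}(\pi(x)))$, while the diagonal vertical block inverts entrywise, using $\varepsilon_i^{-1} = \varepsilon_i$, to give $b^{-2}\varepsilon_i\delta^{ij}$. This is precisely the stated expression for $\tilde\gamma^{ij}$.

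The computation is essentially routine, so the main obstacle is purely one of bookkeeping: organising the two frames consistently — verifying that $(\pi_* e_{k+1}, \dots, \pi_* e_n)$ is the $g_B$-orthonormal frame with respect to which the symbols $\gamma_{ij}(\pi(x))$ are to be interpreted, and checking that the warping function is evaluated at the same value $u(x) = f(\pi(x))$ on both sides. Once these identifications are fixed, the vanishing of the cross terms and the horizontal-block matching both follow at once from $\nabla u = [\nabla f]^h$.
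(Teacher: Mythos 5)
Your proof is correct and is simply the detailed version of what the paper dismisses as ``immediate from the definitions'': expanding $\tilde\gamma_{ij}=b(u(x))^2\varepsilon_i\delta_{ij}-(e_iu)(e_ju)$, using $\nabla u=[\nabla f]^h$ to kill the vertical cross terms, matching the horizontal block with $\gamma_{ij}\circ\pi$ via the submersion property, and inverting blockwise. No discrepancy with the paper's approach.
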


\begin{proof}
It is immediate from the definitions.
\end{proof}

Our goal is to reduce the partial differential equation \eqref{eq:pde2} to an ordinary differential equation. To do this, we will take the base $B$ of the submersion to be $1$-dimensional. Hence, suppose from now on that we have a pseudo-Riemannian submersion $\pi \colon \left( N, g_N \right) \to \left( J, g_J \right)$, where $J \subseteq \mathbb{R}$ is an open interval with a general pseudo-Riemannian metric $g_J (s) = \tilde{\varepsilon} \alpha(s) \diff s^2$, $s \in J$, where $\tilde{\varepsilon} \in \{\pm 1\}$ and $\alpha \colon J \to (0,+\infty)$ is a strictly positive smooth function. We will assume that the fibres of $\pi$ have constant mean curvature, so that we get a smooth function $h \colon J \to \mathbb{R}$ with $h(s)$ being the mean curvature of the hypersurface $\pi^{-1}\{s\}$ with respect to the normal vector field $\nabla \pi$, for each $s \in J$.
Now note that
\begin{equation*}
\nabla f = \frac{\tilde{\varepsilon}f'}{\alpha} \partial_s \hspace{1cm} \text{and hence} \hspace{1cm} g_J \left( \nabla f , \nabla f \right) = \frac{\tilde{\varepsilon}f'^2}{\alpha} .
\end{equation*}
So, in the previous notation,
\begin{equation}\label{eq:defW}
W \coloneqq W_F = \sqrt{\hat{\varepsilon} \left( \frac{\tilde{\varepsilon}f'^2}{(b\circ f)^2 \alpha} -1 \right)} \, .
\end{equation}
Now we need a technical Lemma.

\begin{lemma}\label{lemma:div}
Let $z \colon J \to \mathbb{R}$ be a smooth function. Then,
\begin{equation*}
\div_J \left( z(s) \partial_s \right) = z' + z \frac{\alpha'}{2\alpha} \, .
\end{equation*}
\end{lemma}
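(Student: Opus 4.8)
The plan is to apply the standard local-coordinate formula for the divergence of a vector field on a pseudo-Riemannian manifold, which in one dimension collapses to a single derivative. First I would recall that for a metric written in local coordinates $(x^i)$, with $|g| \coloneqq |\det(g_{ij})|$, the divergence of $V = V^i \partial_{x^i}$ is $\div V = \tfrac{1}{\sqrt{|g|}} \partial_{x^i}\bigl(\sqrt{|g|}\, V^i\bigr)$. In our situation the manifold $J$ is one-dimensional with the single coordinate $s$, so $g_{ss} = \tilde\varepsilon\, \alpha(s)$ and the field in question is $V = z(s)\,\partial_s$, i.e. $V^s = z$.

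Next I would compute $|g| = |\det(g_{ss})| = |\tilde\varepsilon\, \alpha| = \alpha$, using that $\alpha > 0$ and $\tilde\varepsilon \in \{\pm 1\}$, whence $\sqrt{|g|} = \sqrt{\alpha}$. Substituting into the formula gives $\div_J(z\,\partial_s) = \tfrac{1}{\sqrt\alpha}\,\partial_s(\sqrt\alpha\, z)$, and expanding the derivative by the product and chain rules yields $\tfrac{1}{\sqrt\alpha}\bigl(\tfrac{\alpha'}{2\sqrt\alpha}\, z + \sqrt\alpha\, z'\bigr) = z' + z\,\tfrac{\alpha'}{2\alpha}$, which is exactly the claimed identity.

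As a consistency check I would also run the computation through the Levi-Civita connection, writing $\div(z\,\partial_s) = \partial_s z + \Gamma^s_{ss}\, z$, with $\Gamma^s_{ss} = \tfrac12 g^{ss}\partial_s g_{ss} = \tfrac12 \cdot \tfrac{\tilde\varepsilon}{\alpha}\cdot \tilde\varepsilon\alpha' = \tfrac{\alpha'}{2\alpha}$, recovering the same answer. The computation is entirely routine; the only point deserving care is the role of the sign $\tilde\varepsilon$, which enters both $g_{ss}$ and $g^{ss}$. The result must be independent of whether the metric is positive or negative definite, and indeed $\tilde\varepsilon^2 = 1$ forces the two factors to cancel in $\Gamma^s_{ss}$, while $|\det g| = \alpha$ regardless of the sign of $\tilde\varepsilon$. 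This sign bookkeeping is the one place where an error could slip in, so I would verify it explicitly.
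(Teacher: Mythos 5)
Your proof is correct, and the computation via the Levi-Civita connection that you include as a consistency check (namely $\div(z\,\partial_s) = z' + \Gamma^s_{ss}\,z$ with $\Gamma^s_{ss} = \tfrac{\alpha'}{2\alpha}$) is exactly the argument the paper gives, phrased there as $\div_J(z\,\partial_s) = \tilde{\varepsilon}\, g_J\bigl( \nabla^{J}_{\frac{1}{\sqrt{\alpha}}\partial_s}(z\,\partial_s), \tfrac{1}{\sqrt{\alpha}}\partial_s \bigr)$. Your primary route through the density formula $\tfrac{1}{\sqrt{|g|}}\partial_s(\sqrt{|g|}\,z)$ is an equivalent standard identity, and your sign bookkeeping for $\tilde{\varepsilon}$ is handled correctly.
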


\begin{proof}
Indeed,
\begin{align*}
    \pushQED{\qed}
    \div_J \left( z(s) \partial_s \right) &= \tilde{\varepsilon} g_J \left( \nabla^{J}_{\frac{1}{\sqrt{\alpha}}\partial_s} \left( z(s) \partial_s \right) , \frac{1}{\sqrt{\alpha}} \partial_s \right) \\
    &= \frac{\tilde{\varepsilon}}{\alpha} g_J \left( z'(s) \partial_s + z(s) \nabla^{J}_{\partial_s} \partial_s , \partial_s \right) \\
    &= z' + z \frac{\alpha'}{2\alpha} \, . \qedhere
    \popQED
\end{align*}
\renewcommand{\qed}{}
\end{proof}

Finally, we compute the mean curvature vector of $\tilde{F}$ in terms of the mean curvature vector of $F$.

\begin{lemma}\label{lemma:curvature}
In the previous situation, we have that:
\begin{equation*}
\vec{H}_{\tilde{F}} = \vec{H}_{F}^h + \hat{\varepsilon} \left( \frac{h(f' \circ \pi)}{(b\circ f \circ \pi)^2(W\circ \pi)} - \frac{(b'\circ f \circ \pi)}{(b\circ f \circ \pi)(W \circ \pi)} (n-1)\right) \nu_{\tilde{F}} \, .
\end{equation*}
\end{lemma}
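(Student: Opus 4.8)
The strategy is to evaluate $\vec{H}_{\tilde F}$ as the $g$-trace of the second fundamental form of the graph in a $\tilde\gamma$-orthonormal frame adapted to $\pi$, and to split that trace into a horizontal and a vertical part. Since $J$ is one-dimensional we have $k=n-1$: only $e_n$ is $\pi$-horizontal while $e_1,\dots,e_{n-1}$ are $\pi$-vertical. By the block form of $\tilde\gamma$ in Lemma \ref{lemma:metric}, the vectors $\hat e_i:=(b\circ f\circ\pi)^{-1}e_i$ for $1\le i\le n-1$, together with an appropriate normalisation $\hat e_n$ of the horizontal vector, constitute a $\tilde\gamma$-orthonormal frame with $g(\tilde F_*\hat e_i,\tilde F_*\hat e_i)=\varepsilon_i$ and vanishing mixed terms. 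Both $\vec{H}_{\tilde F}$ and $\vec{H}_F^h$ are proportional to $\nu_{\tilde F}$ -- the latter because Lemma \ref{lemma:basics}(2) identifies $\nu_F^h=\nu_{\tilde F}$ -- so it suffices to compare the scalars $g(\vec{H}_{\tilde F},\nu_{\tilde F})$ and $g(\vec{H}_F,\nu_F)\circ\pi$ and then multiply by $\varepsilon'$.

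For the single horizontal index $i=n$ I would invoke Lemma \ref{lemma:basics}(3)--(4): $\tilde F_*\hat e_n$ is the $\tilde\pi$-horizontal lift of $F_*\pi_*\hat e_n$, and since $\nu_{\tilde F}$ is horizontal the submersion identity $\mathcal H(\nabla^{N\times I}_{X^h}Y^h)=(\nabla^{J\times I}_XY)^h$ gives
\begin{equation*}
g\!\left(\nabla^{N\times I}_{\tilde F_*\hat e_n}\tilde F_*\hat e_n,\nu_{\tilde F}\right)=g\!\left(\nabla^{J\times I}_{F_*\pi_*\hat e_n}F_*\pi_*\hat e_n,\nu_F\right)\circ\pi .
\end{equation*}
As $F$ is a graph over the one-dimensional base $J$, the right-hand side is exactly $g(\vec{H}_F,\nu_F)\circ\pi$, so the horizontal index reproduces precisely $\vec{H}_F^h$.

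For a vertical index $i$ note first that $\tilde F_*e_i=e_i$ has no $\partial_t$-component, since $\nabla u=\nabla(f\circ\pi)$ is horizontal and hence $g_N(\nabla u,e_i)=0$. The function $(b\circ u)^{-1}$ factors through $\pi$, so it is annihilated by $e_i$; combining this with the warped-product formula $\nabla^{N\times I}_{e_i}e_i=\nabla^N_{e_i}e_i+(b\circ u)(b'\circ u)\varepsilon_i\,\partial_t$ yields
\begin{equation*}
\nabla^{N\times I}_{\tilde F_*\hat e_i}\tilde F_*\hat e_i=\frac{1}{(b\circ u)^2}\left(\nabla^N_{e_i}e_i+(b\circ u)(b'\circ u)\varepsilon_i\,\partial_t\right).
\end{equation*}
Pairing with $\nu_{\tilde F}=W^{-1}\big((b\circ u)^{-2}(\nabla u,0)+\partial_t\big)$, using $g(\partial_t,\partial_t)=-1$ and $g|_{TN}=(b\circ u)^2g_N$, the $\partial_t$-part contributes $-\varepsilon_i(b'\circ u)/((b\circ u)W)$ and the tangential part contributes $(b\circ u)^{-2}W^{-1}g_N(\nabla^N_{e_i}e_i,\nabla u)$.

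Summing $\varepsilon_i$ times these over $1\le i\le n-1$, the warping term collapses via $\sum_i\varepsilon_i^2=n-1$ to $-(b'\circ u)(n-1)/((b\circ u)W)$, which is the second summand in the statement. For the tangential term, the chain rule for gradients gives $\nabla u=(f'\circ\pi)\,\nabla\pi$, whence
\begin{equation*}
\sum_{i=1}^{n-1}\varepsilon_i\,g_N(\nabla^N_{e_i}e_i,\nabla u)=(f'\circ\pi)\sum_{i=1}^{n-1}\varepsilon_i\,g_N(\nabla^N_{e_i}e_i,\nabla\pi)=(f'\circ\pi)(h\circ\pi),
\end{equation*}
by the definition of $h$ as the mean curvature of the fibre traced against $\nabla\pi$. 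Multiplying the assembled vertical scalar by $\varepsilon'$ produces exactly the bracketed coefficient of $\nu_{\tilde F}$, and adding $\vec{H}_F^h$ completes the argument. The one step requiring genuine care is this last identification: it is precisely where the constant-mean-curvature hypothesis on the fibres is used, so that the pointwise trace depends only on $s=\pi(x)$ and equals $h\circ\pi$, and it hinges on matching the convention that $h$ is measured against the (non-unit) normal $\nabla\pi$, which is what makes the factors of $\alpha$ cancel cleanly. Everything else is bookkeeping of the warped-product connection and of the block structure of $\tilde\gamma$ from Lemma \ref{lemma:metric}.
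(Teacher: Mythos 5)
Your proposal is correct and follows essentially the same route as the paper: split the $\tilde\gamma$-trace of the second fundamental form into its horizontal and vertical blocks using Lemma \ref{lemma:metric}, identify the horizontal contribution with $\vec{H}_F^h$ via Lemma \ref{lemma:basics}, and compute the vertical contribution from the warped-product connection formula together with $\nabla u=(f'\circ\pi)\nabla\pi$ and the definition of $h$ relative to $\nabla\pi$. The only cosmetic difference is that you work with a normalised orthonormal frame $\hat e_i=(b\circ f\circ\pi)^{-1}e_i$ whereas the paper keeps the frame $e_i$ and carries the factors in $\tilde\gamma^{ij}$, which is the same computation.
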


\begin{proof}
Recall \eqref{eq:def_h}. In our setting, for each $X,Y \in \Gamma\left( \Tan N \right)$ the second fundamental form of $\tilde{F}$ is defined by
\begin{equation*}
II_{\tilde{F}} (X,Y) = \left( \nabla^{N \times I}_{\tilde{F}_* X} \left(\tilde{F}_* Y \right) \right)^{\perp} \, ,
\end{equation*}
and
\begin{equation*}
\vec{H}_{\tilde{F}} = \tracemine_{\tilde{\gamma}} II_{\tilde{F}} = \sum_{i,j=1}^{n} \tilde{\gamma}^{ij} \left( \nabla^{N \times I}_{\tilde{F}_* e_i} \left( \tilde{F}_* e_j \right) \right)^{\perp} = \hat{\varepsilon} \sum_{i,j=1}^{n} \tilde{\gamma}^{ij} g_{N \times I} \left( \nabla^{N \times I}_{\tilde{F}_* e_i} \tilde{F}_* e_j, \nu_{\tilde{F}} \right) \nu_{\tilde{F}} \, .
\end{equation*}
    Using Lemmata \ref{lemma:basics} and \ref{lemma:metric}, we obtain:
\begin{align*}
\vec{H}_{\tilde{F}} &= \hat{\varepsilon} \sum_{i,j=1}^{n} \tilde{\gamma}^{ij} g_{N \times I} \left( \nabla^{N \times I}_{\tilde{F}_* e_i} \tilde{F}_* e_j, \nu_{\tilde{F}} \right) \nu_{\tilde{F}}\\
&= \hat{\varepsilon} \sum_{i=1}^{k}\sum_{j=1}^{k} \frac{1}{b^2} \varepsilon_i \delta_{ij} g_{N \times I} \left( \nabla^{N \times I}_{\tilde{F}_* e_i} \tilde{F}_* e_j, \nu_{\tilde{F}} \right) \nu_{\tilde{F}} + \\
&+ \hat{\varepsilon} \sum_{i=k+1}^{n}\sum_{j=k+1}^{n} \left(\gamma^{ij} \circ \pi \right) g_{N \times I} \left( \nabla^{N \times I}_{\tilde{F}_* e_i} \tilde{F}_* e_j, \nu_{\tilde{F}} \right) \nu_{\tilde{F}} \\
&= \frac{1}{(b\circ f \circ \pi)^2} \hat{\varepsilon} \sum_{i=1}^{k} \varepsilon_i g_{N \times I} \left( \nabla^{N \times I}_{\tilde{F}_* e_i} \tilde{F}_* e_i, \nu_{\tilde{F}} \right) \nu_{\tilde{F}} + \vec{H}_{F}^{h} \, .
\end{align*}

But, for every $1 \leq i \leq k$, using that $e_i$ is $\pi$-vertical, we obtain that
\begin{equation*}
    g_{N \times I} \left( \nabla^{N \times I}_{\tilde{F}_* e_i} \tilde{F}_* e_i, \nu_{\tilde{F}} \right) = \frac{-(b\circ f \circ \pi) (b'\circ f \circ \pi)}{W\circ \pi} \varepsilon_i + \frac{1}{W \circ \pi} g_N \left( \nabla^{N}_{e_i} e_i, \nabla u\right) .
\end{equation*}

In our case, $k = n-1$, and $\nabla u = \left( f' \circ \pi \right) \nabla \pi$, which yields the result.
\end{proof}

And we obtain the desired ODE:

\begin{thmm}
In the previous situation, $\tilde{F}$ defines a pointwise $X$-soliton if, and only if, $f$ satisfies the following ODE:
\begin{equation} \label{ode}
    f'' = \left( \tilde{\varepsilon} \alpha - \frac{f'^2}{\left( b \circ f \right)^2}\right) \left( h f' + \left( b \circ f \right) \left[ \left( b \circ f\right)^2 - n \left( b' \circ f \right) \right]  \right) + \frac{f'}{2} \left( \log \left( \alpha [b \circ f]^2 \right) \right)' \, .
\end{equation}
\end{thmm}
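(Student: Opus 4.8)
The plan is to project the soliton equation onto the normal line and push everything down to the one-dimensional base $J$ through the submersion. Since $\vec{H}_{\tilde{F}}$ is normal, equation \eqref{eq:soliton_rw} at $t=0$ is equivalent to the scalar identity $g(\vec{H}_{\tilde{F}},\nu_{\tilde{F}}) = (b\circ u)\,g(\partial_t,\nu_{\tilde{F}})$. From \eqref{eq:definitionnu} (with $W_{\tilde{F}} = W\circ\pi$ by Lemma \ref{lemma:basics}(1)) one has $g(\partial_t,\nu_{\tilde{F}}) = -1/(W\circ\pi)$, because $(\nabla u,0)\perp\partial_t$ and $g(\partial_t,\partial_t)=-1$; hence the condition reads $g(\vec{H}_{\tilde{F}},\nu_{\tilde{F}}) = -(b\circ f\circ\pi)/(W\circ\pi)$. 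I would then feed in Lemma \ref{lemma:curvature}. Its first summand $\vec{H}_F^{\,h}$ and $\nu_{\tilde{F}}$ are both horizontal lifts (Lemma \ref{lemma:basics}(2)), so since $\tilde{\pi}$ preserves horizontal inner products, $g(\vec{H}_F^{\,h},\nu_{\tilde{F}}) = g(\vec{H}_F,\nu_F)\circ\pi$, while the second summand contributes $\varepsilon'(\dots)\,g(\nu_{\tilde{F}},\nu_{\tilde{F}}) = (\dots)$ since $g(\nu_{\tilde{F}},\nu_{\tilde{F}})=\varepsilon'$. As every term is a function on $J$ composed with $\pi$, the identity descends to the equation on $J$
\begin{equation*}
g(\vec{H}_F,\nu_F) + \frac{h f'}{(b\circ f)^2 W} - \frac{(n-1)(b'\circ f)}{(b\circ f) W} = -\frac{b\circ f}{W}.
\end{equation*}

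Next I would make $g(\vec{H}_F,\nu_F)$ explicit. Here $F$ is a graph over the one-dimensional base, so I apply \eqref{eq:lima} with $\dim = 1$, using $\nabla f = (\tilde{\varepsilon} f'/\alpha)\partial_s$ and hence $|\nabla f|^2 = \tilde{\varepsilon} f'^2/\alpha$. The divergence term $\div_J(\nabla f/((b\circ f)^2 W))$ I evaluate with Lemma \ref{lemma:div} applied to $z = \tilde{\varepsilon} f'/(\alpha(b\circ f)^2 W)$, which gives $z' + z\,\alpha'/(2\alpha)$. Substituting and clearing the common factor $-W$, the equation becomes a single relation among $f$, $f'$, $f''$, $\alpha$, $b\circ f$ and $W$.

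Finally I would isolate $f''$. The only subtlety is that $W$ itself depends on $f'$, so $W'$—and therefore $z'$—secretly contains $f''$: differentiating \eqref{eq:defW} gives $2WW' = \varepsilon'\,(\tilde{\varepsilon} f'^2/((b\circ f)^2\alpha))'$, whose $f''$-part is $2\varepsilon'\tilde{\varepsilon}\, f' f''/((b\circ f)^2\alpha)$. Collecting the two sources of $f''$ (the explicit one in $z'$ and the hidden one in $W'$) and using the identity $\tilde{\varepsilon}\alpha - f'^2/(b\circ f)^2 = -\tilde{\varepsilon}\varepsilon'\alpha W^2$, which follows at once from \eqref{eq:defW}, the coefficient of $f''$ collapses to $\big((b\circ f)^2(\tilde{\varepsilon}\alpha - f'^2/(b\circ f)^2)\big)^{-1}$. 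Multiplying through by its reciprocal puts the equation in the form \eqref{ode}: the $hf'$ and $b\circ f$ terms reproduce the factor $(\tilde{\varepsilon}\alpha - f'^2/(b\circ f)^2)\big(hf' + (b\circ f)[(b\circ f)^2 - n(b'\circ f)]\big)$, while the remaining lower-order contributions, coming from the $\alpha'/(2\alpha)$ part of Lemma \ref{lemma:div} and from the non-$f''$ part of $z'$, assemble into $\tfrac{f'}{2}(\log(\alpha[b\circ f]^2))'$.

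The main obstacle is exactly this last bookkeeping: because $W$ is an algebraic function of $f'$, the divergence produces many terms, and one must repeatedly invoke the $W$-identity above to simplify both the $f''$-coefficient and the zeroth-order curvature terms into the compact logarithmic-derivative form. No new geometric input is needed beyond Lemmata \ref{lemma:basics}, \ref{lemma:div} and \ref{lemma:curvature} together with formula \eqref{eq:lima}; the remaining content is purely the submersion reduction and the algebraic simplification.
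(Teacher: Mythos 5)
Your proposal is correct and follows essentially the same route as the paper's proof: project the soliton equation onto $\nu_{\tilde F}$, use Lemma \ref{lemma:curvature} and Lemma \ref{lemma:basics} to descend to the base $J$, then expand $g(\vec H_F,\nu_F)$ via \eqref{eq:lima} and Lemma \ref{lemma:div} and isolate $f''$ using the identity $\tilde\varepsilon\alpha - f'^2/(b\circ f)^2 = -\tilde\varepsilon\varepsilon'\alpha W^2$. The only differences are cosmetic (you apply Lemma \ref{lemma:div} to the full divergence rather than first peeling off the $(b\circ f)^{-2}$ factor by the product rule, and you make explicit the hidden $f''$ inside $W'$, which the paper subsumes into its ``tedious but straightforward computation'').
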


\begin{proof}
Note that $\vec{H}_{\tilde{F}} = (b \circ f \circ \pi) \, \partial_{t}^{\perp}$ if, and only if,
\begin{align*}
 g_{N \times I} \left( \vec{H}_{\tilde{F}} , \nu_{\tilde{F}} \right) &=  (b \circ f \circ \pi) \, g_{N \times I} \left( \partial_{t}^{\perp} , \nu_{\tilde{F}} \right) \\
&= (b\circ f \circ \pi) \, g_{N \times I} \left( \partial_{t} , \nu_{\tilde{F}} \right) \\
&= \frac{-(b\circ f \circ \pi)}{W \circ \pi} \, .
\end{align*}
By Lemma \ref{lemma:curvature}, this is equivalent to
\begin{equation*}
g_{N \times I} \left( \vec{H}_{F}^h , \nu_{F}^h \right) + \frac{h(f'\circ \pi)}{(b\circ f \circ \pi)^2W} - \frac{b'\circ f \circ \pi}{(b\circ f \circ \pi)(W \circ \pi)}(n-1) = \frac{-(b \circ f \circ \pi)}{W \circ \pi} \, .
\end{equation*}
And this is equivalent to
\begin{equation*}
\pi^* \left(g_{J \times I} \left( \vec{H}_{F} , \nu_{F} \right) \right) = \frac{1}{W\circ \pi} \left[ \frac{b' \circ f \circ \pi}{b \circ f \circ \pi }(n-1) - \frac{h(f'\circ \pi)}{(b \circ f \circ \pi)^2} - (b \circ f \circ \pi) \right] \, .
\end{equation*}

But now, using \eqref{eq:lima}, the product rule for the divergence, Lemma \ref{lemma:div} and the definition of $W$ \eqref{eq:defW}, we get:
\begin{align*}
    & g_{J \times I} \left( \vec{H}_{F} , \nu_{F} \right) = \frac{-1}{(b\circ f)^2} \div \left( \frac{\nabla f}{W}\right) + \frac{b' \circ f}{(b\circ f)W} \left( \frac{1}{(b\circ f)^2}\lvert \nabla f \rvert^2 -1 \right) \\
    &= \frac{-1}{(b\circ f)^2} \div \left( \frac{\nabla f}{W}\right) + \frac{\hat{\varepsilon} (b' \circ f)}{b\circ f}W \\
    &= \frac{-1}{(b\circ f)^2} \div \left( \frac{\tilde{\varepsilon}f'}{\alpha W} \partial_s \right) + \frac{\hat{\varepsilon} (b' \circ f)}{b \circ f}W \\
    &= \frac{-1}{(b\circ f)^2} \left( \frac{\tilde{\varepsilon}f'}{\alpha W} \right)' - \frac{1}{(b\circ f)^2} \frac{\tilde{\varepsilon}f'}{\alpha W} \frac{\alpha'}{2\alpha} + \frac{\hat{\varepsilon} (b'\circ f)}{b \circ f}W \\
    &= \frac{\hat{\varepsilon} \tilde{\varepsilon}}{(b\circ f)^2 \alpha W^3}f'' - \frac{\hat{\varepsilon} \tilde{\varepsilon} \alpha'}{2(b\circ f)^2 \alpha^2 W^3}f' - \frac{\hat{\varepsilon} (b' \circ f)}{(b\circ f)^5 \alpha^2 W^3}f'^4 + \frac{\hat{\varepsilon} (b'\circ f)}{b \circ f}W \, .
\end{align*}

Putting everything together, after a tedious but straightforward computation, one obtains the result.
\end{proof}

\begin{prop}\label{prop:finalode}
In the situation of the beginning of this section, $F$ defines a (pointwise) $X$-soliton if, and only if, for all $t$ (for $t=0$),
\begin{equation} \label{eq:ode2}
    f'' = \left( \tilde{\varepsilon} \alpha - \frac{f'^2}{\left( b \circ f \right)^2}\right) \left( h f' + \left( b \circ f \right) \left[ \left( b \circ f_t\right)^2 - n \left( b' \circ f_t \right) \right]  \right) + \frac{f'}{2} \left( \log \left( \alpha [b \circ f]^2 \right) \right)' \, .
    \end{equation}
\end{prop}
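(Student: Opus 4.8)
The plan is to reproduce, at the level of the reduced ODE \eqref{ode}, the argument that takes Theorem \ref{thm:generalpde} to Proposition \ref{prop:pde2}. The starting point is that, directly from the definition of $X$-soliton, the family $\{\tilde{F}_t\}$ is an $X$-soliton if and only if, for each fixed $t$, the graph of $u_t$ satisfies the pointwise equation $(b\circ u_t)\,\partial_t^{\perp}=\vec{H}_{\tilde{F}_t}$; that is, if and only if $\tilde{F}_t$ is a pointwise $X$-soliton for every $t$. Since $u=f\circ\pi$ and the flow $A$ acts only on the $I$-coordinate, setting $f_t(s):=A(f(s),t)$ gives $u_t=f_t\circ\pi$, so each $\tilde{F}_t$ is again a graph arising from the same submersion $\pi$ (with the same $\alpha$ and the same fibre mean curvature $h$). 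Hence the preceding Theorem applies verbatim at each $t$: $\tilde{F}_t$ is a pointwise $X$-soliton if and only if $f_t$ satisfies \eqref{ode} with $f$ replaced throughout by $f_t$. It then remains to show that this one-parameter family of equations is equivalent to \eqref{eq:ode2}; note that at $t=0$ one has $f_0=f$, so consistency with the pointwise case is automatic.

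First I would import the two transformation laws already obtained in the proof of Proposition \ref{prop:pde2}. Applying the chain rule to \eqref{eq:A} gives $\partial_1 A(f(s),t)=(b\circ f_t)/(b\circ f)$, whence
\begin{equation*}
f_t'=\frac{b\circ f_t}{b\circ f}\,f'.
\end{equation*}
Squaring shows $f_t'^2/(b\circ f_t)^2=f'^2/(b\circ f)^2$, so that both $W_t=W$ and the factor $\tilde{\varepsilon}\alpha-f'^2/(b\circ f)^2$ are independent of $t$. Differentiating the displayed law once more and re-substituting $f_t'$ then expresses $f_t''$ in terms of $f,f',f''$, the only genuinely new contribution being a term proportional to $(b'\circ f_t)-(b'\circ f)$.

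The final step is a substitution in which, writing $\rho:=(b\circ f_t)/(b\circ f)$, I would check that $\rho$ factors out of the whole equation. The inner factor $hf_t'+(b\circ f_t)[(b\circ f_t)^2-n(b'\circ f_t)]$ is exactly $\rho$ times the bracketed factor appearing in \eqref{eq:ode2}; meanwhile, expanding $\tfrac{f_t'}{2}\big(\log(\alpha[b\circ f_t]^2)\big)'$ with $f_t'=\rho f'$ produces a $(b'\circ f_t)$-term that cancels precisely against the corresponding contribution inside $f_t''$, leaving $f_t''-\tfrac{f_t'}{2}(\log(\alpha[b\circ f_t]^2))'=\rho\big(f''-\tfrac{f'}{2}(\log(\alpha[b\circ f]^2))'\big)$. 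Since $\rho$ is nowhere zero, dividing through collapses the equation for $f_t$ exactly into \eqref{eq:ode2}. I expect the only real difficulty to be bookkeeping: keeping the arguments $f$ and $f_t$ distinct through the second derivative and the logarithmic-derivative term, and confirming that the $(b'\circ f_t)$-contributions cancel. No new geometric input is required beyond the two transformation laws.
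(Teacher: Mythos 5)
Your proposal is correct and follows essentially the same route as the paper: derive the transformation laws $f_t'=\frac{b\circ f_t}{b\circ f}f'$ and the corresponding expression for $f_t''$ (exactly as in the proof of Proposition \ref{prop:pde2}), then substitute $f_t$ for $f$ in \eqref{ode} and simplify. The only difference is that you make explicit the cancellation the paper leaves implicit — namely that the factor $\rho=(b\circ f_t)/(b\circ f)$ pulls out of both sides after the $(b'\circ f_t)$-terms cancel — and that computation checks out.
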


\begin{proof}
It is analogous to the one of Proposition \ref{prop:pde2}. One first finds that, for each $t$,
\begin{equation*}
f_t ' = \frac{b \circ f_t}{b \circ f} \, f' \, ,
\end{equation*}
and
\begin{equation*}
f_t '' = \frac{b \circ f_t}{b \circ f} \, f'' + \frac{b \circ f_t}{(b \circ f)^2} \, f'^2 \left( (b' \circ f_t)-(b' \circ f) \right) \, .
\end{equation*}
Then, by substituting $f_t$ for $f$ in equation \eqref{ode}, one gets the result.
\end{proof}

We are interested in solutions $f$ to the ODE \eqref{eq:ode2} which do not depend explicitly on $t$. Suppose that $\tilde{\varepsilon}=+1$, \textit{i.e.}, that the codomain of $\pi$ is Riemannian (recall that $\alpha >0$). Then, one can easily find two families of solutions to the equation
\begin{equation}\label{eq:miracle}
    \alpha - \frac{f'^2}{\left( b \circ f \right)^2} = 0
\end{equation}
by realising that
\begin{equation*}
    \alpha - \frac{f'^2}{\left( b \circ f \right)^2} = \left( \sqrt{\alpha} + \frac{f'}{\left( b \circ f \right)} \right) \left( \sqrt{\alpha} - \frac{f'}{\left( b \circ f \right)} \right),
\end{equation*}
and using the method of separation of variables.

Any function $f$ satisfying \eqref{eq:miracle} also satisfies
\begin{equation*}
    f'' =  \frac{f'}{2} \left( \log \left( \alpha [b \circ f]^2 \right) \right)' \, ,
\end{equation*}
as can be readily verified. This fact will be crucial for our discussion in Section \ref{section:grimreapers}.

These solutions satisfy $W = 0$, so the restriction of the metric to these surfaces is degenerate, and hence the notion of mean curvature does not make sense on them. They define lightlike submanifolds in $M$, which will be of great importance in what follows.

\subsection{Special families of warping functions}\label{section:types}

We now turn to the study of the solutions to \eqref{eq:pde2}. For general warping functions, the general solution to \eqref{eq:pde2} will depend explicitly on the parameter $t$. This is reasonable, since we are imposing that, when we move the graph of a function according to the flow of a vector field, we have the soliton condition \eqref{eq:soliton_rw} for every $t$. This is quite a strong condition, and it makes sense for it not to happen in the general case. However, by the nature of the problem, solutions to \eqref{eq:pde2} with geometric significance are the ones which do not depend explicitly on $t$.

For some particular choices of warping function $b$, we can get rid of the time dependence of the general solution to \eqref{eq:pde2}. Let us see this.

Note that the dependence on $t$ in the PDE \eqref{eq:pde2} and in the ODE \eqref{eq:ode2} is condensed in the term
\begin{equation*}
\left( b \circ u_t \right)^2 - n \left( b' \circ u_t \right) .
\end{equation*}

A \textit{sufficient} condition for the general solution not to depend explicitly on $t$ is that this term be constant, which we will call $d$. A sufficient condition for this to happen is that
\begin{equation}\label{eq:condition_b}
2 b b' - n b'' = 0 .
\end{equation}

This forces $b \colon I \to \left( 0, +\infty \right)$ to be of the form:

\begin{itemize}
\item Type I : $b_I(t) = c$, for some $c > 0$, with $I_{I} = \mathbb{R}$ and $d_{I}\coloneqq b_{I}^2 -nb_{I}'= c^2$.

\item Type II: $b_{II}(t) = n c \tan \left( c t \right)$, for some $c >0$, with $I_{II} = \left(0, \pi/2c \right)$ and $d_{II}\coloneqq b_{II}^2 -nb_{II}' = -c^2n^2$.
\item Type III: $b_{III}(t) = - n c \tanh \left( c t \right)$, for some $c >0$, with $I_{III} = \left( - \infty , 0  \right)$ and $d_{III}\coloneqq b_{III}^2 -nb_{III}' = c^2n^2$.

\end{itemize}

If $b$ is of Type I, we recover the case already studied in \cite{LOSemiriemannian}, so our main interest lies in the other families of warping functions. This discussion leads us quite naturally to the study of those GRW spacetimes whose warping function is of Type II or III.

One could also wonder what the minimal condition is for the general solution to the PDE \eqref{eq:pde2} not to depend explicitly on $t$. As a first step, in this paper we will restrict our attention to warping functions of Types II and III, as they let us find many interesting examples.

\begin{rem}
It is natural to ask whether the manifolds introduced in Section \ref{section:types} satisfy the Null Convergence Condition (NCC), a hypothesis included in the recent paper \cite{delira} by de Lira and Roing.
Recall that the Null Convergence Condition (NCC) requires \(\Ric(U,U) \ge 0\) for every lightlike vector \(U\).
In a GRW spacetime, a lightlike vector can be written as \(U = X + \partial_t\), where \(X\) is a unit spacelike vector tangent to \(N\). Then,
\[
\Ric(U,U) = \Ric^{g_N}(X,X) + (n-1)\,b^{-2}(b'^2 - b\,b'').
\]
This shows that whether the NCC holds depends both on the curvature of \(N\) and the choice of the warping function \(b\). For instance:
\begin{itemize}
\item If \(b(t) = nc\tan(ct)\), the NCC fails even if \(N\) is Ricci-flat.
\item If \(b(t) = -nc\tanh(ct)\), the NCC is satisfied when \(N\) is Ricci-flat.
\end{itemize}
\end{rem}

\section{Grim Reapers} \label{section:grimreapers}

As a first example of our new notion, we want to consider the analogues of the most well-known classical translators, namely the Grim Reapers. This corresponds to taking $N = \mathbb{R}^{n+1} = \mathbb{R}^n \times \mathbb{R}$ with $\mathbb{R}^n$ acting by translations on the first $n$ coordinates. The relevant Riemannian submersion $\mathbb{R}^{n+1} \to \mathbb{R}$ is, then, the projection to the last coordinate.

We can generalise this situation by letting $\left(N,g_N\right)$ be a product manifold $P \times \mathbb{R}$, with $(P,g_P)$ any Riemannian manifold. It is clear that the projection $\pi \colon N = P \times \mathbb{R} \to \mathbb{R}$ to the second factor is a Riemannian submersion, so $\alpha = 1$. The fibres of $\pi$ are totally geodesic submanifolds of $N$, so they have constant mean curvature $h = 0$ with respect to $\nabla \pi$. The ODE \eqref{eq:ode2} in this case reads
\begin{equation*}
    f'' = \left( 1 - \frac{f'^2}{\left( b \circ f \right)^2}\right) \left( b \circ f \right) \left[ \left( b \circ f_t\right)^2 - n \left( b' \circ f_t \right) \right]  + \frac{\left(b' \circ f \right)}{\left( b \circ f \right)} f'^ 2 \, .
\end{equation*}

As we discussed before, we are primarily interested in the cases where the solution to this ODE does not depend explicitly on $t$, to get a genuine solution to the mean curvature flow. This is why we will restrict to the cases $b = b_{II}$ and $b = b_{III}$, where the ODE reduces to
\begin{equation} \label{eq:ode_grim_reapers}
    f'' = \left( 1 - \frac{f'^2}{\left( b_{\star} \circ f \right)^2}\right) \left( b_{\star} \circ f \right) d_{\star}  + \frac{\left(b_{{\star}}' \circ f \right)}{\left( b_{\star} \circ f \right)} f'^ 2 \, ,
\end{equation}
where ${\star} \in \left\{ II, III \right\}$ -- \textit{vid.} definitions of $b_{\star}$ and $d_{\star}$ in Section \ref{section:types}.

\begin{rem}
    If we take $(P,g_P)$ to be $\mathbb{R}^{n-1}$ with the Euclidean metric, the $X$-solitons corresponding  to the solutions to the ODE \eqref{eq:ode_grim_reapers} will yield a natural generalisation of the classical Grim Reaper cylinders in $\mathbb{R}^{n+1}$ \cite{grimreapers, trivinogrimreapers}.
    \end{rem}

\begin{rem} \label{rem:causality}
Let $f$ be a solution to the ODE \eqref{eq:ode_grim_reapers} defined in a neighbourhood of $s \in \mathbb{R}$. Recall the definitions of $\nu$ \eqref{eq:definitionnu} and $W$ \eqref{eq:definitionW}  Then, for every $p \in P$, the graph of $f \circ \pi$ in the GRW spacetime $\mathcal{RW}\left( N , g_{N},I_{\star},b_{\star} \right)$ is
\begin{itemize}
\item timelike at $(p,s,f(s))$ iff $1 - f'(s)^2/\left( b_{\star}(f(s))\right)^2 <0$;
\item  lightlike at $(p,s,f(s))$ iff $1 - f'(s)^2/\left( b_{\star}(f(s)) \right)^2 =0$;
\item spacelike at $(p,s,f(s))$ iff $1 - f'(s)^2/\left( b_{\star}(f(s)) \right)^2 >0$.
\end{itemize}
\end{rem}

The reason why we are restricting our attention to \textit{graphical} $X$-solitons is motivated by the following example:

\begin{ex} \label{example:vertical}
For each $s_0 \in \mathbb{R}$, define $P_{s_0} = P \times \{s_0\}$. This is a totally geodesic submanifold of $N$, and hence $P_{s_0} \times I_{\star}$ is a totally geodesic submanifold of $\mathcal{RW}\left( N , g_{N},I_{\star},b_{\star} \right)$. So its mean curvature vector is $0$. Moreover, $\partial_t^{\perp}=0$ as well. Therefore, these \textit{vertical} submanifolds are examples of $X$-solitons which are not graphical. In Proposition \ref{prop:wing} we will show that these are the only non-graphical ones.
\end{ex}

Some observations about the ODE \eqref{eq:ode_grim_reapers} are in order.

\begin{lemma} \label{lemma:reversibility}
The ODE \eqref{eq:ode_grim_reapers} is reversible, \textit{i.e.}, if $f$ is a solution, then $g(s) \coloneqq f(-s)$ is a solution as well. 
\end{lemma}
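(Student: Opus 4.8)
The plan is to exploit two structural features of the ODE \eqref{eq:ode_grim_reapers}: first, that it is \emph{autonomous}, i.e. the variable $s$ does not appear explicitly on either side; and second, that the right-hand side depends on the first derivative $f'$ only through $f'^2$. Granting these two observations, reversibility will follow from a one-line application of the chain rule, so no hard analysis is required.

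More precisely, first I would write $R(a,v)$ for the right-hand side of \eqref{eq:ode_grim_reapers} viewed as a function of the value $a = f$ and the first-derivative slot $v = f'$, namely
\begin{equation*}
R(a,v) = \left( 1 - \frac{v^2}{\left( b_{\star}(a) \right)^2}\right) b_{\star}(a) \, d_{\star} + \frac{b_{\star}'(a)}{b_{\star}(a)} \, v^2 \, ,
\end{equation*}
and observe that $v$ enters this expression only via $v^2$. Consequently $R$ is \emph{even} in its second argument, i.e. $R(a,-v) = R(a,v)$ for all admissible $a,v$.

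Next, setting $g(s) \coloneqq f(-s)$, the chain rule gives $g'(s) = -f'(-s)$ and $g''(s) = f''(-s)$. Evaluating the equation satisfied by $f$ at the point $-s$ yields $f''(-s) = R\bigl(f(-s), f'(-s)\bigr)$. Substituting $f(-s) = g(s)$ and $f'(-s) = -g'(s)$, and then using the evenness of $R$ in its second slot, this becomes $g''(s) = R\bigl(g(s), g'(s)\bigr)$, which is exactly \eqref{eq:ode_grim_reapers} for $g$. Hence $g$ is a solution, as claimed.

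The argument presents no real obstacle; the only point worth verifying carefully is that the right-hand side genuinely contains $f'$ only in even powers, which one reads off directly from \eqref{eq:ode_grim_reapers}. It is worth remarking (though not strictly part of the proof) that the presence of the term $h f'$ in the more general ODE \eqref{eq:ode2} is precisely what would break reversibility there; it disappears in the present situation because the fibres of $\pi$ are totally geodesic, so $h = 0$.
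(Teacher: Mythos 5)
Your argument is correct and is exactly the (omitted) verification behind the paper's statement: the right-hand side of \eqref{eq:ode_grim_reapers} is autonomous and depends on $f'$ only through $f'^2$, so the chain rule computation $g'(s)=-f'(-s)$, $g''(s)=f''(-s)$ immediately gives the result. The paper marks the lemma as immediate and provides no written proof, so nothing further is needed; your closing remark about the $hf'$ term obstructing reversibility in the general ODE \eqref{eq:ode2} is also accurate.
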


\begin{lemma}\label{lemma:light}
Let $\star \in \{II,III\}$. For every $(s_0,y_0) \in \mathbb{R} \times I_{\star}$, there exist exactly two lightlike solutions $l^{\star}_{+}, l^{\star}_{-}$ such that $l^{\star}_{+} (s_0) = y_0 = l^{\star}_{-} (s_0)$. They are given by the explicit expressions:
\begin{align*}
    &l^{II}_{\pm} (s) = \frac{1}{c} \arcsin \left( \sin\left(c y_0\right) e^{\pm n c^2 \left(s-s_0\right)} \right)  , \\
    &l^{III}_{\pm} (s) = \frac{1}{c} \arcsinh \left( \sinh\left(c y_0\right) e^{\mp n c^2 \left(s-s_0\right)} \right)  .
\end{align*}
Moreover, if a solution $f$ to \eqref{eq:ode_grim_reapers} satisfies
\[
1 - \frac{f'(\widehat{s})^2}{\left( b_{\star}(f(\widehat{s}))\right)^2}= 0
\]
at some point $\widehat{s}$, then it is globally a solution to the ODE
\begin{equation}\label{eq:light_ode}
    1 - \frac{f'(s)^2}{\left( b_{\star}(f(s))\right)^2}= 0 \, .
\end{equation}
\end{lemma}
\begin{proof}
As in the final part of Section \ref{section:submersions}, one can find the lightlike solutions by solving the ODE \eqref{eq:light_ode}, obtaining the functions in the statement of the lemma. The final assertion follows from a straightforward calculation, showing that a solution to \eqref{eq:light_ode} is automatically a solution to \eqref{eq:ode_grim_reapers}, together with uniqueness of solutions to ODEs.
\end{proof}

\begin{lemma} \label{lemma:uniqueness}
    Let $\star \in \{II,III\}$ and $(s_0,y_0) \in \mathbb{R} \times I_{\star}$. Let $l^{\star}_{\pm}$ be the two lightlike solutions with $l^{\star}_{\pm}(s_0)=y_0$. Let $f\colon L \to I_{\star}$ be a solution to \eqref{eq:ode_grim_reapers} with $f(s_0)=y_0$ different from $l^{\star}_{\pm}$, where $s_0 \in L \subseteq \mathbb{R}$ is the maximal interval of definition of $f$. Then, the graph of $f$ only intersects the graphs of $l^{\star}_{\pm}$ at $(s_0,y_0)$.
    \end{lemma}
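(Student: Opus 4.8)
The plan is to prove that the graph of $f$ cannot touch either light-like curve a second time by showing that $f$ never becomes light-like, and then treating $l^{\star}_{+}$ and $l^{\star}_{-}$ as one-sided barriers. Throughout I would use that $b_{\star} > 0$ on $I_{\star}$, so that the right-hand side of \eqref{eq:ode_grim_reapers}, regarded as a function of $(f,f')$, is smooth; in particular standard existence and uniqueness applies to the second-order ODE \eqref{eq:ode_grim_reapers}.

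\textbf{Step 1: no light-like contact.} First I would show that $f'(s)^2 \neq b_{\star}(f(s))^2$ for every $s \in L$. Suppose, for contradiction, that $f$ is light-like at some $s_1 \in L$, i.e. $f'(s_1) = \sigma\, b_{\star}(f(s_1))$ for a sign $\sigma \in \{+1,-1\}$. Let $\ell$ be the light-like solution through $(s_1,f(s_1))$ whose derivative there equals $f'(s_1)$ (it exists and is one of the two given by the preceding Lemma, since those have derivatives $\pm b_{\star}$ at that point). Because the light-like functions satisfy \eqref{eq:light_ode} and hence also solve \eqref{eq:ode_grim_reapers}, both $f$ and $\ell$ solve \eqref{eq:ode_grim_reapers} with the same initial data $(f(s_1),f'(s_1))$ at $s_1$; uniqueness forces $f = \ell$ on $L$. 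But then $\ell(s_0) = f(s_0) = y_0$, so $\ell$ is a light-like solution through $(s_0,y_0)$, whence $\ell \in \{l^{\star}_{+}, l^{\star}_{-}\}$, contradicting the hypothesis that $f$ differs from $l^{\star}_{\pm}$.

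\textbf{Step 2: constant sign of the barrier slopes.} Having excluded light-like points, set $g_{+} := f' - (b_{\star}\circ f)$ and $g_{-} := f' + (b_{\star}\circ f)$, so that $f'^2 - (b_{\star}\circ f)^2 = g_{+}\, g_{-}$ is nowhere zero on $L$. Since $L$ is an interval, the continuous functions $g_{+}$ and $g_{-}$ are each nowhere zero, hence of constant sign. Next I would introduce the comparison functions $D_{\pm} := f - l^{\star}_{\pm}$, which satisfy $D_{\pm}(s_0)=0$. Recalling that $l^{\star}_{\pm}$ are light-like with $(l^{\star}_{\pm})' = \pm (b_{\star}\circ l^{\star}_{\pm})$, at any zero $s_1$ of $D_{+}$ one has $f(s_1)=l^{\star}_{+}(s_1)$, so $D_{+}'(s_1) = f'(s_1) - b_{\star}(f(s_1)) = g_{+}(s_1)$; likewise $D_{-}'(s_1) = g_{-}(s_1)$ at any zero of $D_{-}$. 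Thus the derivative of $D_{\pm}$ at each of its zeros carries the fixed sign of $g_{\pm}$.

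\textbf{Conclusion and main obstacle.} I would finish with the elementary fact that a $C^1$ function on an interval whose derivative is, say, strictly negative at each of its zeros vanishes at most once: given two zeros $a<b$, the first zero to the right of $a$ is approached from below, forcing a nonnegative derivative there and a contradiction (the positive case is symmetric). Applying this to $D_{+}$ and $D_{-}$, using the constant sign of $g_{+}$ and $g_{-}$ respectively, shows that each $D_{\pm}$ vanishes only at $s_0$, which is exactly the assertion that the graph of $f$ meets the graphs of $l^{\star}_{\pm}$ only at $(s_0,y_0)$. The crux is \textbf{Step 1}: everything after it is a soft monotonicity argument, whereas Step 1 requires both that the light-like solutions be genuine solutions of \eqref{eq:ode_grim_reapers} and that positivity of $b_{\star}$ on $I_{\star}$ renders the right-hand side smooth enough for uniqueness, together with the preceding Lemma to identify the matching light-like solution as one of $l^{\star}_{\pm}$.
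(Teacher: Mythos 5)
Your proof is correct, and it is organised differently from the paper's. The paper argues by contradiction with a first-crossing argument: assuming the solution starts space-like, it takes the infimum $s_{\times}$ of crossing times with $l^{\star}_{-}$, shows via a symmetric difference quotient and a ``locally to one side'' comparison that $f$ must be time-like at $s_{\times}$, and then invokes Bolzano's theorem on the causal character $1 - f'^2/(b_\star\circ f)^2$ to produce an intermediate light-like point, which contradicts uniqueness. Your argument isolates the same essential input --- that the light-like curves solve the second-order ODE \eqref{eq:ode_grim_reapers} and that its right-hand side is smooth in $(f,f')$ because $b_\star>0$, so a solution that is light-like at one point is light-like everywhere --- but deploys it up front as a global ``no light-like contact'' statement, and then replaces the crossing analysis by the observation that $D_{\pm}=f-l^{\star}_{\pm}$ has derivative $g_{\pm}=f'\mp(b_\star\circ f)$ at each of its zeros, a quantity of constant nonzero sign, so $D_{\pm}$ vanishes at most once. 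This buys you a cleaner and more uniform treatment: you avoid the paper's somewhat delicate one-sided comparison near $s_{\times}$ (where the claim that $f$ lies below $l^{\star}_{-}$ just past the crossing is justified only by an appeal to uniqueness) and you handle all sign cases at once rather than ``the other cases are analogous.'' The only point worth tightening in your write-up is the assertion in Step 1 that uniqueness forces $f=\ell$ ``on $L$'': strictly, one should run the usual open-closed argument on the set where $f'=\sigma\, b_\star\circc f$ to conclude $f$ is light-like on all of $L$ and hence agrees with one of $l^{\star}_{\pm}$ at $s_0$; but this is exactly the level of detail at which the paper itself asserts that such an $f$ ``is identically one of the light-like solutions,'' so it is not a gap relative to the original. (Minor caveat: your identification $(l^{\star}_{\pm})'=\pm\, b_\star\circ l^{\star}_{\pm}$ should be checked against the explicit formulas, which it does satisfy; if the labels were swapped the argument would go through after relabelling.)
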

    \begin{proof}
    Recall Remark \ref{rem:causality}. Firstly, suppose that
    \begin{equation*}
     1 - \frac{f'(s_0)^2}{\left( b_{\star}(f(s_0))\right)^2}> 0 \, .
    \end{equation*}
This is equivalent to the fact that $\left(l^{\star}_{-}\right)'(s_0) < f'(s_0) < \left(l^{\star}_{+}\right)'(s_0)$.
Suppose that there exists $s_1 > s_0 \in L$ where $l^{\star}_{-}$ is defined such that $f(s_1) = l^{\star}_{-}(s_1)$. Now define
    \begin{equation*}
    s_{\times} \coloneqq \inf \left\{ s \in (s_0,s_1] \colon f(s) = l^{\star}_{-}(s)\right\}  .
    \end{equation*}
    Then, as $\left(l^{\star}_{-}\right)'(s_0) < f'(s_0)$, it is clear that $s_0 < s_{\times}$. Moreover,  $f(s_{\times}) = l^{\star}_{-}(s_{\times})$. We claim that
    \begin{equation*}
         1 - \frac{f'(s_{\times})^2}{\left( b_{\star}(f(s_{\times}))\right)^2} < 0 \, .
    \end{equation*}
By definition of $l_{\pm}^{\star}$, this is equivalent to $f'(s_{\times}) < \left(l^{\star}_{-}\right)'(s_{\times})$.
Note that, for every $s_0 < s < s_{\times}$, $\left(l^{\star}_{-}\right)(s) < f(s)$. Hence, $f'(s_{\times}) \leq (l^{\star}_{-})'(s_{\times})$. On the other hand, for $\delta > 0$ sufficiently small, $f(s_{\times} + \delta) < l^{\star}_{-}(s_{\times} + \delta)$, because otherwise we would have $f'(s_{\times}) \geq (l^{\star}_{-})'(s_{\times})$, which would force $f'(s_{\times}) = (l^{\star}_{-})'(s_{\times})$, which in turn would imply that this is true everywhere, by the final assertion of Lemma \ref{lemma:light}. So, the claim is proved.

Now, by Bolzano's theorem, there exists $s_0 < \widehat{s} < s_{\times}$ such that
    \begin{equation*}
        1 - \frac{f'(\widehat{s})^2}{\left( b_{\star}(f(\widehat{s}))\right)^2}= 0 \, .
    \end{equation*}
But this would again imply that $f$ is globally a timelike solution to the ODE \eqref{eq:light_ode} by Lemma \ref{lemma:light}, which contradicts the assumption at the beginning of the proof.

Roughly speaking, we have just proved that, if a solution starts spacelike, then it cannot cross the corresponding $l^{\star}_{-}$ in positive time. The other cases are analogous.
\end{proof}

\begin{lemma}\label{lemma:0_axis}
For all $\star \in \{II,III\}$, the solutions to \eqref{eq:ode_grim_reapers} with initial condition in $\mathbb{R} \times I_{\star}$ do not blow up to the $0$-axis.
\end{lemma}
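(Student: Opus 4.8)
The plan is to reduce the autonomous second-order ODE \eqref{eq:ode_grim_reapers} to a first-order \emph{linear} equation for the squared slope, and then read off the behaviour near the $0$-axis from an explicit first integral. Since \eqref{eq:ode_grim_reapers} does not depend on $s$, on any interval where $f$ is strictly monotone I may regard $q \coloneqq (f')^2$ as a function of the value $f$. Writing $b = b_\star(f)$, $b' = b_\star'(f)$ and $d = d_\star$, the chain rule $f'' = \tfrac12\,dq/df$ turns \eqref{eq:ode_grim_reapers} into
\begin{equation*}
\frac{dq}{df} = \frac{2\left(b' - d\right)}{b}\,q + 2\,b\,d \, .
\end{equation*}

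The crucial observation is that $q = b_\star(f)^2$ is a particular solution of this linear equation: indeed $\tfrac{d}{df}b^2 = 2bb'$ equals $\tfrac{2(b'-d)}{b}b^2 + 2bd$. Hence the general solution is
\begin{equation*}
q(f) = b_\star(f)^2 + C\,\rho(f) \, , \qquad \rho(f) = \exp\left( \int \frac{2\left(b_\star' - d_\star\right)}{b_\star}\,df \right) > 0 \, ,
\end{equation*}
for a constant $C$ fixed by the initial data; note that $C=0$ recovers precisely the light-like solutions $f' = \pm b_\star(f)$ of \eqref{eq:light_ode}. A direct computation of $\rho$ (using $\int df/b_{II} = \tfrac{1}{nc^2}\log\sin(cf)$ and the analogous primitive for $b_{III}$) gives $\rho(f) = b_\star(f)^2 \sin^{2n}(cf)$ for $\star = II$ and $\rho(f) = b_\star(f)^2 \lvert\sinh(cf)\rvert^{2n}$ for $\star = III$. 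In both cases $\rho(f)/b_\star(f)^2 \to 0$ as $f$ tends to the $0$-axis, so that $q(f) \sim b_\star(f)^2$ there, independently of $C$.

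With this in hand I argue by contradiction. By Lemma \ref{lemma:reversibility} it suffices to treat a solution $f$ with $f(s) \to 0$ as $s \to s^*$ for some finite $s^*$ at the right end of its maximal interval. Since $q(f) = b_\star(f)^2\left(1 + C\,\rho/b_\star^2\right) \sim b_\star(f)^2 > 0$ for $f$ close enough to $0$, the slope $f'$ cannot vanish on the final approach; thus $f$ is strictly monotone near $s^*$ and $\lvert f' \rvert = \sqrt{q(f)}$ there. Consequently the parameter needed to reach the $0$-axis is
\begin{equation*}
s^* - s_1 = \int \frac{df}{\lvert f' \rvert} = \int \frac{df}{\sqrt{b_\star(f)^2 + C\,\rho(f)}} \, ,
\end{equation*}
whose integrand is asymptotic to $1/\lvert b_\star(f) \rvert$ as $f \to 0$. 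Because $\lvert b_\star(f)\rvert \sim nc^2\,\mathrm{dist}(f,0)$, this integral diverges, contradicting $s^* < \infty$. Therefore no solution reaches the $0$-axis in finite parameter time.

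I expect the main obstacle to be the monotonicity and turning-point bookkeeping near $f = 0$, since a priori $f$ could oscillate or develop a critical point as it descends. This is exactly where the sign of $C$ enters: time-like data give $C>0$, so $q > b_\star^2$ has no zero and $f$ descends monotonically, while space-like data give $C<0$, so $q$ vanishes at a turning point $f_* > 0$ and the solution is confined to the region adjacent to the axis. Either way the estimate $\rho/b_\star^2 \to 0$ forces $q \sim b_\star^2 > 0$ on the final approach, which is what makes the divergence of the time integral—and hence the whole argument—rigorous and uniform in $\star$.
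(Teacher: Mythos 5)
Your argument is correct, but it is genuinely different from the one in the paper. The paper desingularises the equation by multiplying through by $b_\star\circ f$ (equation \eqref{star}), observes that $f\equiv 0$ solves the resulting ODE, shows that a solution reaching the axis at a finite $s_0$ would have to satisfy $\tilde f'(s_0)=0$ there (because $d_\star\neq b_\star'(0)$), and then concludes $\tilde f\equiv 0$ by uniqueness. You instead produce the explicit first integral $(f')^2=b_\star(f)^2+C\,\rho(f)$ with $\rho/b_\star^2=\sin^{2n}(cf)$ resp.\ $\lvert\sinh(cf)\rvert^{2n}$, deduce that every solution's slope is pinched to that of the light-like solutions near the axis, and show the travel time $\int df/\sqrt{q(f)}\sim\int df/(nc^2\lvert f\rvert)$ diverges. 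Your route is longer but quantitative, and it sidesteps the delicate point in the paper's proof (extension to, and uniqueness for, an ODE whose leading coefficient degenerates exactly at the axis); it also recovers, in the form $C=c_1^\star/(c^2n^2)$, the same conservation law that underlies the paper's later explicit solutions $\beta^\star_\pm$ of \eqref{eq:ode_beta}, so it would mesh well with the subsequent case analysis. One small presentational point: since you introduce $C$ on an interval of monotonicity, you should note that $\bigl((f')^2-b_\star(f)^2\bigr)/\rho(f)$ is constant along the \emph{entire} solution (a direct differentiation in $s$ shows this), or invoke the fact that there is at most one critical point; otherwise using $q(f)>0$ to justify monotonicity on the final approach looks mildly circular. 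With that remark added, the proof is complete and rigorous.
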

\begin{proof}
The constant solution $f = 0$ is a solution to the ODE obtained by multiplying both sides of \eqref{eq:ode_grim_reapers} by $\left( b_{\star} \circ f \right)$, namely
\begin{equation}\label{star} (b_{\star}\circ f)f'' = \big( (b_{\star}\circ f)^2 -f'^2\big) d_{\star} +(b_{\star}'\circ f)f'^2 \, .
\end{equation}
\color{black}
Hence, if $f$ is a solution to \eqref{eq:ode_grim_reapers} with $\lim_{s \to s_0}f(s)=0$ for some $s_0 \in \mathbb{R}$, then $f$ extends to a solution $\tilde{f}$ to the ODE \eqref{star} with $\tilde{f}(s_0)=0$, and so we have that
\begin{equation*}
-\tilde{f}'(s_0)^2 d_{\star} + b_{\star}'(0) \tilde{f}'(s_0)^2 = 0 \, .
\end{equation*}
Therefore, either $\tilde{f}'(s_0) = 0$ or $d_{\star} = b'_{\star}(0)$. The latter never happens, as one can readily check for $\star = II,III$. If $\tilde{f}'(s_0)=0$, then $\tilde{f}=0$ by uniqueness, which is a contradiction.
\end{proof}

\begin{lemma} \label{lemma:minmax}
For $\star = II$ (resp. $\star = III$), if a solution to \eqref{eq:ode_grim_reapers} has a critical point, then it is an absolute maximum (resp. absolute minimum).
\end{lemma}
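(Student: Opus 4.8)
The plan is to first evaluate the ODE \eqref{eq:ode_grim_reapers} at a critical point to read off the sign of $f''$ there, and then to upgrade the resulting \emph{local} statement to a \emph{global} one by exploiting the fact that every critical point has the same character.

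For the first step, suppose $f\colon L \to I_{\star}$ solves \eqref{eq:ode_grim_reapers} and that $f'(s_0)=0$ for some $s_0 \in L$. Substituting $f'(s_0)=0$ directly into \eqref{eq:ode_grim_reapers} makes the factor $1 - f'^2/(b_{\star}\circ f)^2$ equal to $1$ and annihilates the final term, leaving
\[
f''(s_0) = b_{\star}(f(s_0))\, d_{\star}.
\]
Since $b_{\star}$ takes values in $(0,+\infty)$ by definition, the sign of $f''(s_0)$ equals the sign of $d_{\star}$. Recalling from Section \ref{section:types} that $d_{II}=-c^2 n^2<0$ whereas $d_{III}=c^2 n^2>0$, I conclude that for $\star = II$ every critical point satisfies $f''(s_0)<0$, hence is a strict local maximum, and that for $\star = III$ every critical point satisfies $f''(s_0)>0$, hence is a strict local minimum.

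For the second step I treat $\star = II$; the case $\star = III$ is identical after reversing all inequalities. Set $g:=f'$. At any zero $s_0$ of $g$ we have just shown $g'(s_0)=f''(s_0)<0$, so the zeros of $g$ are isolated and $g$ is strictly decreasing through each of them; in particular $g$ can only cross zero from positive to negative. I then claim $g<0$ on all of $L\cap(s_0,+\infty)$. If this failed, then $s_2:=\inf\{s>s_0 : g(s)=0\}$ would be a well-defined zero of $g$ with $g<0$ on $(s_0,s_2)$, forcing the one-sided derivative $g'(s_2)\ge 0$ and contradicting $f''(s_2)<0$. The symmetric argument to the left gives $g>0$ on $L\cap(-\infty,s_0)$. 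Hence $f$ is strictly increasing on $L\cap(-\infty,s_0)$ and strictly decreasing on $L\cap(s_0,+\infty)$, so $f(s_0)$ is the absolute maximum of $f$ over its domain $L$.

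The only genuinely delicate point is this global step: I must rule out the solution having $f'$ return to zero or change sign again at a point other than $s_0$, which is exactly what the one-sided monotonicity argument handles, crucially using that \emph{every} critical point is forced to be of the same type (a strict maximum for $\star=II$, a strict minimum for $\star=III$). Everything else reduces to a direct substitution into \eqref{eq:ode_grim_reapers} and a sign check against the explicit constants $d_{\star}$.
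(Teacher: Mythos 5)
Your proof is correct and follows essentially the same route as the paper: substitute $f'(s_0)=0$ into \eqref{eq:ode_grim_reapers} to get $f''(s_0)=b_{\star}(f(s_0))\,d_{\star}$, read off the sign from $d_{II}<0$, $d_{III}>0$, and conclude that all critical points have the same strict character, which forces there to be at most one and makes it absolute. The only difference is that you spell out the ``at most one critical point, hence absolute'' step via the explicit sign argument for $g=f'$, which the paper leaves implicit.
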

\begin{proof}
If, for some $s_0 \in \mathbb{R}$, a solution $f$ to \eqref{eq:ode_grim_reapers} satisfies $f'(s_0)=0$, then
\begin{equation*}
f''(s_0) = b_{\star} \left(f(s_0)\right) d_{\star} .
\end{equation*}
And $b_{\star} > 0$, $d_{II}<0$ and $d_{III}>0$. Hence, if $f$ has a critical point, it is a local maximum (resp. local minimum). This implies that $f$ has at most one critical point. Hence, this extreme has to be absolute.
\end{proof}

\begin{lemma} \label{lemma:limits}
Let $\star \in \{II,III\}$ and let $f$ be a solution to the ODE \eqref{eq:ode_grim_reapers} defined on $(a,+\infty)$ (resp $(-\infty,a)$), for some $a \in \mathbb{R}$. Then, the limit
\begin{equation*}
l \coloneqq \lim_{s \to \ + \infty} f (s) \hspace{.3cm} \left( \text{resp.} \lim_{s \to \ - \infty} f (s)\right)
\end{equation*}
exists and is one of the endpoints of the interval $I_{\star}$, \textit{i.e.}, if $\star = II$ then $l \in \{0, \pi/2c\}$ and if $\star=III$ then $l \in \{-\infty, 0\}$.
\end{lemma}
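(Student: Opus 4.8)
The plan is to first reduce to a single case and establish existence of the limit, and then to rule out the limit being an interior point via an order-reduction trick. By Lemma \ref{lemma:reversibility} the substitution $s \mapsto -s$ turns a solution on $(-\infty,a)$ into one on $(-a,+\infty)$ and interchanges the two limits, so it suffices to treat $f \colon (a,+\infty) \to I_\star$ and $L := \lim_{s\to+\infty} f(s)$. By Lemma \ref{lemma:minmax}, $f$ has at most one critical point, so $f'$ has constant sign on a tail $(s_\ast,+\infty)$ and $f$ is strictly monotone there. A monotone function valued in $\overline{I_\star}$ has a limit, so $L \in \overline{I_\star}$ exists; what remains is to exclude $L \in \mathrm{int}(I_\star)$.

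For the key step, suppose $L$ is an interior point, so that $L$ is finite, $b_\star(L) > 0$ and $b_\star'(L)$ is finite. On the monotone tail I would regard $f'$ as a function of $y = f$ and set $q(y) := f'^2$. Since the ODE \eqref{eq:ode_grim_reapers} is \emph{autonomous}, one has $f'' = \tfrac12\, dq/dy$, and substituting into \eqref{eq:ode_grim_reapers} (writing $b = b_\star$, $d = d_\star$) turns the second-order equation into the first-order \emph{linear} ODE
\[
\frac{dq}{dy} = 2\,b\,d + \frac{2\,(b' - d)}{b}\,q .
\]
Because $b > 0$ throughout $I_\star$, the coefficients of this linear equation are smooth up to $y = L$; hence there is no finite-$y$ blow-up and $q$ extends continuously to $y = L$, with a finite limit $Q := \lim_{y\to L} q(y) \ge 0$.

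I would then split into two cases. If $Q > 0$, then $|f'(s)| = \sqrt{q(f(s))} \to \sqrt{Q} \ne 0$, so the monotone $f$ would escape to $\pm\infty$, contradicting $f \to L$ with $L$ finite. If $Q = 0$, then $f'(s) \to 0$; feeding this back into \eqref{eq:ode_grim_reapers} gives $f''(s) \to b_\star(L)\,d_\star \ne 0$ (using $b_\star(L) > 0$ and $d_\star \ne 0$), but $f'' \to c \ne 0$ forces $f' \to \pm\infty$, again contradicting $f' \to 0$. Either way we reach a contradiction, so $L$ must be a boundary point of $I_\star$, namely $\{0,\pi/2c\}$ for $\star = II$ and $\{-\infty,0\}$ for $\star = III$.

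The two genuine difficulties are ruling out oscillation (so that the limit exists at all) and excluding a finite interior limit. The first is dispatched cleanly by the monotonicity coming from Lemma \ref{lemma:minmax}. For the second, the naive route is a case-heavy phase-plane analysis whose sign bookkeeping differs between Types II and III; the decisive simplification is the order reduction to a linear equation in $q = f'^2$, which makes the finiteness of $Q$ automatic and collapses both obstructions to the single arithmetic fact $b_\star(L)\,d_\star \ne 0$ — equivalently, the absence of interior equilibria $(y,0)$ of the associated autonomous system. I expect this reduction to be the crux; the remaining steps are routine once it is in place.
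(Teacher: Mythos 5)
Your proof is correct, and for the decisive step it takes a genuinely different---and more careful---route than the paper. The two arguments agree up to the existence of the limit: both use Lemma \ref{lemma:minmax} to get strict monotonicity on a tail, hence $l \in \overline{I_{\star}}$. To exclude an interior limit, the paper simply asserts that $\lim f' = \lim f'' = 0$ and takes limits in \eqref{eq:ode_grim_reapers} to obtain $d_{\star}\, b_{\star}(l) = 0$, a contradiction; it does not justify why those limits of $f'$ and $f''$ exist (a monotone convergent function need not satisfy $f' \to 0$ pointwise without some extra input such as Barbalat's lemma or an a priori bound on $f''$). Your order reduction---using that \eqref{eq:ode_grim_reapers} is autonomous to pass to $q = f'^2$ as a function of $y = f$, which solves the linear equation $dq/dy = 2bd + 2(b'-d)q/b$ with coefficients smooth up to the putative interior limit $L$---supplies exactly the missing justification, since linearity forces $q$ to extend continuously to $y=L$ with a finite limit $Q$; your two cases $Q>0$ and $Q=0$ then both collapse to the same arithmetic fact $b_{\star}(L)\, d_{\star} \neq 0$ that the paper invokes. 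In short, you prove the statement by the same underlying mechanism (no interior equilibria of the associated autonomous system) but in a self-contained way where the paper is slightly informal; the only points worth making explicit are that the tail monotonicity is strict, so that $y = f(s)$ is a legitimate change of variable, and that $L$ interior guarantees $b_{\star}(L)>0$ and $b_{\star}'(L)$ finite---both of which you use correctly.
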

\begin{proof}
Note that this limit exists because, by Lemma \ref{lemma:minmax}, $f$ has at most one critical point and so, for $\abs{s}$ sufficiently large, $f'(s)$ has constant sign. Suppose for contradiction that
\begin{equation*}
    \lim_{s \to \ + \infty} f (s) \hspace{.3cm} \left( \text{resp.} \lim_{s \to \ - \infty} f (s)\right) \eqqcolon l \in I_{\star} \, .
\end{equation*}
Then, we would have that
\begin{equation*}
    \lim_{s \to \ + \infty} f' (s) = \lim_{s \to \ + \infty} f'' (s) \hspace{.3cm} \left( \text{resp.} \lim_{s \to \ - \infty} f' (s) = \lim_{s \to \ - \infty} f'' (s)\right) = 0 \, .
\end{equation*}
But now, taking these limits in the ODE, we obtain that $d_{\star} b_{\star} (l) = 0$, which is a contradiction.
\end{proof}

\begin{lemma} \label{lemmainverse}
The inverse function $\xi$ of an injective solution $f$ to the ODE \eqref{eq:ode_grim_reapers} satisfies the ODE
\begin{equation} \label{eq:ode_inverse}
\xi'' = \xi' \left[ \left( \frac{1}{b_{\star}^2} - \xi'^2 \right) d_{\star} b_{\star} - \frac{b_{\star}'}{b_{\star}} \right] .
\end{equation}
\end{lemma}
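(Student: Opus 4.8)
The plan is to differentiate the inverse relation directly and substitute the known equation for $f''$. First I would record that an injective solution $f$ has nowhere-vanishing derivative: by Lemma \ref{lemma:minmax} every critical point of a solution to \eqref{eq:ode_grim_reapers} is an absolute extremum attained at an interior point, at which $f$ fails to be monotone and hence fails to be injective. Therefore $f' \neq 0$ throughout the domain of $f$, so $\xi = f^{-1}$ is well defined and $C^2$ on the image of $f$, and the standard inverse-function formulae apply.

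Next I would set up the book-keeping. Writing $s = \xi(y)$, so that $f(s) = y$, differentiation of $\xi(f(s)) = s$ gives $\xi'(y) = 1/f'(s)$, and a second differentiation gives $\xi''(y) = -f''(s)/f'(s)^3$. The key simplification is that every composition with $f$ collapses: since $f(s) = y$ one has $(b_{\star} \circ f)(s) = b_{\star}(y)$ and $(b_{\star}' \circ f)(s) = b_{\star}'(y)$, while $f'(s) = 1/\xi'(y)$. Thus the right-hand side of \eqref{eq:ode_grim_reapers} can be re-expressed entirely in terms of $y$, $\xi'$, $b_{\star}$ and $b_{\star}'$.

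With these substitutions I would insert the ODE \eqref{eq:ode_grim_reapers} for $f''(s)$ into $\xi'' = -f''/f'^3$. Abbreviating $p := f'(s) = 1/\xi'(y)$, and understanding $b_{\star}, b_{\star}'$ as evaluated at $y$, this reads
\[
\xi'' = -\frac{1}{p^3}\left[\left(1 - \frac{p^2}{b_{\star}^2}\right) b_{\star} d_{\star} + \frac{b_{\star}'}{b_{\star}}\, p^2\right].
\]
Expanding the bracket produces three terms, $-b_{\star} d_{\star}\, p^{-3}$, $d_{\star} b_{\star}^{-1}\, p^{-1}$ and $-b_{\star}' b_{\star}^{-1}\, p^{-1}$. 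Rewriting $p^{-1} = \xi'$ and $p^{-3} = \xi'^3$, and then factoring out the common $\xi'$, I obtain
\[
\xi'' = \xi'\left[\left(\frac{1}{b_{\star}^2} - \xi'^2\right) d_{\star} b_{\star} - \frac{b_{\star}'}{b_{\star}}\right],
\]
which is exactly \eqref{eq:ode_inverse}.

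There is no genuine obstacle here: the computation is a routine application of the chain rule followed by algebraic regrouping. The only step that requires a substantive input rather than mechanical manipulation is the justification that $\xi$ is differentiable, i.e. that $f'$ never vanishes, which is where the injectivity hypothesis together with Lemma \ref{lemma:minmax} enters; everything after that is purely formal.
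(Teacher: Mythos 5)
Your proof is correct and follows the same route as the paper, which simply states that the lemma is a direct application of the inverse function theorem; you have merely written out the chain-rule computation $\xi'' = -f''/f'^3$ and the substitution of \eqref{eq:ode_grim_reapers} explicitly, and the algebra checks out. Your preliminary observation that injectivity forces $f'\neq 0$ (via Lemma \ref{lemma:minmax}) is a worthwhile detail the paper leaves implicit.
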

\begin{proof}
It is a direct application of the inverse function theorem.
\end{proof}

We can already obtain an important result:

\begin{prop}\label{prop:wing}
There are no winglike (\textit{vid.} \cite{Clutterbuck}, 2.2) solutions to the ODE \eqref{eq:ode_grim_reapers}.
\end{prop}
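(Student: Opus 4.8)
The plan is to argue by contradiction, using the inverse-function reformulation of Lemma \ref{lemmainverse} as the main engine. By definition, a wing-like solution is one whose graph develops a vertical tangent at a finite value of the parameter: there is a finite $s_*$ with $|f'(s)| \to +\infty$ as $s \to s_*$, while $f(s) \to y_*$ for some $y_*$ in the interior of $I_\star$. (The competing scenario, in which the profile instead runs off to an endpoint of $I_\star$, is governed by Lemmata \ref{lemma:0_axis} and \ref{lemma:limits} together with the explicit light-like solutions $l^\star_\pm$, and corresponds to the genuine Grim Reapers rather than to a wing.) Near such a point the curve $s \mapsto (s,f(s))$ is no longer a graph over $s$, but it is a graph over $t$, so I would pass to the inverse function $\xi = f^{-1}$, defined on a one-sided neighbourhood of $y_*$, with $\xi(y) \to s_*$ and $\xi'(y) \to 0$ as $y \to y_*$, since $\xi'(y) = 1/f'(\xi(y))$.

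First I would record that, by Lemma \ref{lemmainverse}, $\xi$ solves \eqref{eq:ode_inverse} on that neighbourhood, and that imposing $\xi'(y_*)=0$ forces $\xi''(y_*)=0$ as well, because the right-hand side of \eqref{eq:ode_inverse} carries an overall factor of $\xi'$. The key step is then to read \eqref{eq:ode_inverse} as a \emph{first-order} ODE for the quantity $q \coloneqq \xi'$, namely $q' = q\,\Phi(y,q)$ with $\Phi(y,q) = \big(b_\star^{-2} - q^2\big)d_\star b_\star - b_\star'/b_\star$. Since $y_*$ lies in the interior of $I_\star$, the warping function $b_\star$ is smooth and strictly positive near $y_*$, so $\Phi$ is smooth and the map $q \mapsto q\,\Phi(y,q)$ is locally Lipschitz there. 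The constant function $q \equiv 0$ solves this ODE, and our $q$ satisfies $q(y_*)=0$; uniqueness for the initial value problem then yields $q \equiv 0$ on the whole neighbourhood, that is $\xi' \equiv 0$. This contradicts $\xi$ being the inverse of a non-constant solution $f$, so no wing-like solution can exist.

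I expect the main obstacle to be not the contradiction itself but the careful handling of the degenerate boundary behaviour, i.e. ruling out the possibility that the would-be wing forms only as $f$ approaches an endpoint of $I_\star$, where $b_\star \to 0$ or $b_\star \to +\infty$ and the coefficients of \eqref{eq:ode_inverse} cease to be Lipschitz. Excluding this cleanly requires the preliminary results: near the lower endpoint one invokes Lemma \ref{lemma:0_axis} to forbid the profile from reaching the $0$-axis, while near the opposite endpoint one compares $f$ with the explicit light-like solutions $l^\star_\pm$ via the non-crossing property of Lemma \ref{lemma:uniqueness} and the causal trichotomy of Remark \ref{rem:causality}. Together these confine any vertical-tangent point to the interior of $I_\star$, where the Lipschitz uniqueness argument above applies and closes the proof; this also supports the assertion in Example \ref{example:vertical} that the vertical totally geodesic submanifolds are the only non-graphical $X$-solitons.
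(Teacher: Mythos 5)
Your argument is essentially the paper's: the paper likewise passes to the inverse function $\xi$ via Lemma \ref{lemmainverse}, observes that a wing-like solution corresponds to a solution of \eqref{eq:ode_inverse} with a critical point, and concludes by uniqueness of solutions (the constants solve \eqref{eq:ode_inverse}) that $\xi$ would be identically constant, a contradiction. Your extra steps --- reducing to the first-order equation for $q=\xi'$ and confining the critical point to the interior of $I_\star$ where the right-hand side is Lipschitz --- merely spell out details the paper's two-line proof leaves implicit.
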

\begin{proof}
A winglike solution to \eqref{eq:ode_grim_reapers} corresponds to a solution $\xi$ of \eqref{eq:ode_inverse} with a critical point. By uniqueness of solution, this would imply that $\xi$ is identically constant, which is a contradiction.
\end{proof}

\begin{rem}
In fact, the constant solutions to \eqref{eq:ode_inverse} correspond to the solitons described in Example \ref{example:vertical}.
\end{rem}

Note that, in ODE \eqref{eq:ode_inverse}, we can define $\beta \coloneqq \xi'$, and $\beta$ satisfies the ODE
\begin{equation}\label{eq:ode_beta}
    \beta' = \beta \left[ \left( \frac{1}{b_{\star}^2} - \beta^2 \right) d_{\star} b_{\star} - \frac{b_{\star}'}{b_{\star}} \right] \, .
\end{equation}
It turns out that we can solve this ODE explicitly. The solutions are
\begin{align*}
&\beta^{II}_{\pm} = \frac{\pm 1}{\tan(c y) \sqrt{c^2 n^2 + c_{1}^{II} \sin^{2 n} (c y)}} \, , \\
&\beta^{III}_{\pm} = \frac{\pm 1}{\tanh(c y) \sqrt{c^2 n^2 + c_{1}^{III} \sinh^{2 n} (c y)}} \, ,
\end{align*}
for $c_{1}^{\star} \in \mathbb{R}$.

\begin{rem}\label{rem:constantsign}
The functions $\beta^{\star}_{\pm}$ have constant sign, and are never $0$, in their maximal intervals of definition. Therefore, if $\beta \colon L_1 \to \mathbb{R}$ is a solution to \eqref{eq:ode_beta} with initial condition $\beta(y_0) = \beta_0$, then
\begin{equation*}
    \xi(y) = \xi_0 + \int_{y_0}^{y} \beta(z) dz
\end{equation*}
is a solution to \eqref{eq:ode_inverse} with $\xi(y_0)=\xi_0$ and $\xi'(y_0)=\beta_0$ which is also defined in the whole of $L_1$, and is injective there. Its inverse function $f$, defined in the whole of $\xi(L_1)$, will satisfy the ODE \eqref{eq:ode_grim_reapers}, by the inverse function theorem.
\end{rem}

In order to study the solutions to our original ODE \eqref{eq:ode_grim_reapers}, it is enough to study the functions $\beta_{\pm}^{\star}$ in detail, as we now illustrate. See the qualitative behaviour of $\beta_{\pm}^{\star}$ in Figures \ref{fig:beta1}, \ref{fig:beta2}, \ref{fig:beta3}, \ref{fig:beta4}, \ref{fig:beta5} and \ref{fig:beta6}.

\begin{center}
    \begin{figure}[!htb]
        \begin{minipage}[b]{0.32\linewidth}
          \centering
          \includegraphics{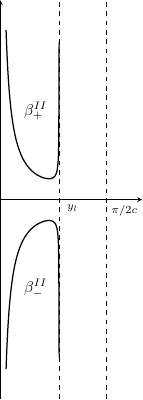}
          \captionsetup{width=0.9\linewidth}
          \caption[Short Caption]{$c_{1}^{II} < -c^2n^2$. }
          \label{fig:beta1}
        \end{minipage}\hfill
        \begin{minipage}[b]{0.32\linewidth}
          \centering
          \includegraphics{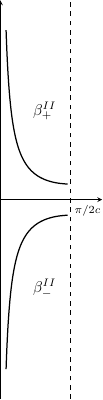}
          \captionsetup{width=0.9\linewidth}
          \caption{$c_{1}^{II} = -c^2n^2$. }
          \label{fig:beta2}
        \end{minipage} \hfill
        \begin{minipage}[b]{0.32\linewidth}
          \centering
          \includegraphics{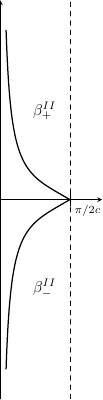}
          \captionsetup{width=0.9\linewidth}
          \caption{$c_{1}^{II} > -c^2n^2$. }
          \label{fig:beta3}
        \end{minipage}
\end{figure}
\end{center}

\begin{center}
    \begin{figure}[!htb]
    \begin{minipage}[h]{0.32\linewidth}
      \centering
      \includegraphics{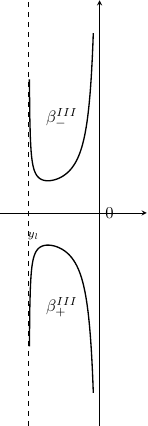}
      \captionsetup{width=0.8\linewidth}
      \caption{$c_{1}^{III} < 0$. }
      \label{fig:beta4}
    \end{minipage}
    \begin{minipage}[h]{0.32\linewidth}
      \centering
      \includegraphics{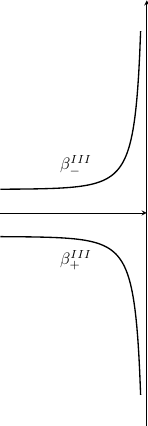}
      \captionsetup{width=0.8\linewidth}
      \caption{$c_{1}^{III} = 0$. }
      \label{fig:beta5}
    \end{minipage} 
    \begin{minipage}[h]{0.32\linewidth}
      \centering
      \includegraphics{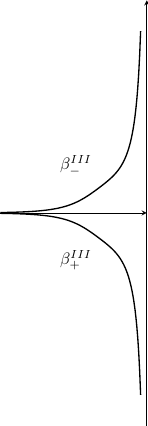}
      \captionsetup{width=0.8\linewidth}
      \caption{$c_{1}^{III} > 0$. }
      \label{fig:beta6}
    \end{minipage}
    \end{figure}
    \end{center}
\FloatBarrier

\begin{lemma} \label{lemma:critical point}
    Let $\star \in \{II,III\}$, and fix $(s_0,y_0) \in \mathbb{R} \times I_{\star}$.
    \begin{itemize}
    \item Let $f$ be the solution to the ODE \eqref{eq:ode_grim_reapers} with $f(s_0)=y_0$ and $f'(s_0) \neq 0$, with $\sign\left( f'(s_0) \right) = \pm 1$. Observe that $f$ is injective in a neighbourhood of $s_0$.
    \item Let $\beta$ be the solution to the ODE \eqref{eq:ode_beta} with $\beta(y_0) = 1/f'(s_0)$.
    \end{itemize}
    Then, $f$ has a critical point if, and only if, there exists $y_l$ in the maximal interval of definition of $\beta$ such that $\abs{\lim_{y \to y_l} \beta(y) } = \infty$.
\end{lemma}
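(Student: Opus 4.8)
The plan is to exploit the exact relationship between $f$ and $\beta$ furnished by Lemma \ref{lemmainverse}. Since $f'(s_0) \neq 0$, the solution $f$ is strictly monotone on a neighbourhood of $s_0$, so it has a local inverse $\xi = f^{-1}$ there, and by construction $\beta = \xi'$. The inverse function theorem gives the pointwise identity $\beta(f(s)) = 1/f'(s)$ on this monotone neighbourhood; by uniqueness of solutions to \eqref{eq:ode_beta} this $\beta$ coincides with the one in the statement, whose value $\beta(y_0) = 1/f'(s_0)$ is dictated by exactly this identity. The content of the lemma is thus the conversion of the infinitesimal equivalence ``$|\beta| \to \infty \Leftrightarrow f' \to 0$'' into the global statement about critical points, and the tools for this conversion are Lemma \ref{lemma:minmax} (a critical point, if any, is a unique absolute extremum) and Lemma \ref{lemma:limits} (the behaviour of $f$ at an infinite endpoint).

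For the forward direction, suppose $f$ has a critical point. By Lemma \ref{lemma:minmax} it is a unique absolute extremum attained at some finite $s_1$; using reversibility (Lemma \ref{lemma:reversibility}) I may assume $s_1 > s_0$. On $[s_0, s_1)$ the function $f$ is strictly monotone, so $\xi$ and $\beta$ are defined along $f([s_0,s_1))$, and letting $s \to s_1^-$ gives $f(s) \to y_l := f(s_1)$ while $f'(s) \to 0$. Hence $y_l$ is an endpoint of the maximal interval of $\beta$ with $|\beta(y)| \to \infty$ as $y \to y_l$. Moreover $y_l$ lies in the interior of $I_\star$: it is bounded away from $0$ by Lemma \ref{lemma:0_axis}, and it cannot equal the other endpoint of $I_\star$ since $f$ takes values in $I_\star$.

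For the converse I argue by contraposition. Suppose $f$ has no critical point; then $f$ is strictly monotone on its entire maximal interval $L$, so $\xi = f^{-1}$ is defined on $f(L)$ and $\beta$ is exactly $\xi'$ there, i.e. the maximal interval of $\beta$ is $f(L)$. A blow-up $|\beta(y)| \to \infty$ at an endpoint $y_l$ of $f(L)$ forces $f'(\xi(y)) \to 0$ as $y \to y_l$, with $\xi(y)$ tending to the corresponding endpoint of $L$. If that endpoint of $L$ were finite, $f$ would extend to it with vanishing derivative, i.e. a critical point, contrary to assumption; so the endpoint of $L$ is infinite, $f \to y_l$ with $f' \to 0$ as $s \to \pm\infty$, and passing to the limit in \eqref{eq:ode_grim_reapers} yields $d_\star\, b_\star(y_l) = 0$. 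Since $b_\star > 0$ and $d_\star \neq 0$, this is impossible for $y_l$ interior to $I_\star$, so such an asymptotic blow-up can only occur at $\partial I_\star$ --- exactly the situation catalogued by Lemma \ref{lemma:limits}. Equivalently, a blow-up of $\beta$ at a $y_l$ interior to $I_\star$ forces $f$ to have a critical point, and combined with the forward direction this is the asserted equivalence (the phrase ``$y_l$ in the maximal interval of definition of $\beta$'' being read as an interior blow-up of $\beta$, as opposed to the boundary blow-ups governed by Lemma \ref{lemma:limits}).

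The main obstacle is precisely this interior-versus-boundary dichotomy: one must match a blow-up of $\beta$ to a critical point attained at a \emph{finite} parameter $s_1$, rather than to a horizontal asymptote reached only as $s \to \pm\infty$. The clean way to settle it is the limiting argument in \eqref{eq:ode_grim_reapers} borrowed from Lemma \ref{lemma:limits}, which rules out $f' \to 0$ at an interior value of $I_\star$ as $s \to \pm\infty$; alternatively, one reads off from the explicit formulae for $\beta^\star_\pm$ that an interior blow-up comes from a simple zero of the factor $\sqrt{c^2 n^2 + c_1^\star \sin^{2n}(cy)}$ (respectively with $\sinh$), so that $\beta \sim (y_l - y)^{-1/2}$ is integrable and $\xi = s_0 + \int_{y_0}^{\cdot}\beta$ reaches $y_l$ at a finite $s_1$, whereas the boundary blow-up at $0$ (where $b_\star$ vanishes) is non-integrable and corresponds to the asymptote.
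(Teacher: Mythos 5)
Your argument is correct and takes essentially the same route as the paper, whose entire proof is that the lemma is ``a direct consequence of the inverse function theorem, together with Remark \ref{rem:constantsign}'': in both cases the substance is the identity $\beta(f(s))=1/f'(s)$ coming from Lemma \ref{lemmainverse} and uniqueness for \eqref{eq:ode_beta}, so that blow-up of $\beta$ at a point of $I_{\star}$ corresponds exactly to vanishing of $f'$ at a finite parameter value. Your write-up simply makes explicit the interior-versus-boundary and finite-versus-asymptotic bookkeeping (via Lemmata \ref{lemma:minmax}, \ref{lemma:0_axis} and \ref{lemma:limits}) that the paper leaves implicit.
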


\begin{proof}
It is a direct consequence of the inverse function theorem, together with Remark \ref{rem:constantsign}.
\end{proof}

Let us determine when the solutions to \eqref{eq:ode_grim_reapers} blow up in finite time. Let $\star \in \{II,III\}$, and fix $(s_0,y_0) \in \mathbb{R} \times I_{\star}$. Let $f$ be a solution to the ODE \eqref{eq:ode_grim_reapers} with $f(s_0)=y_0$. By Lemma \ref{lemma:minmax}, it has at most one critical point. Let $s_0 \in L \subseteq \mathbb{R}$ be its maximal interval of definition. There are two cases:

\begin{enumerate}

\item If $f$ has a critical point, then it is globally defined. Indeed, if $\star = II$, the critical point will be a maximum, so $f$ cannot blow up to $\pi/2c$, and it cannot blow up to $0$ by Lemma \ref{lemma:0_axis}. Similarly, if $\star=III$, the critical point will be a minimum, so $f$ cannot blow up to $-\infty$, and it cannot blow up to $0$ by Lemma \ref{lemma:0_axis}.

\item If $f$ does not have a critical point, then it is injective in $L$, $f'(s_0) \neq 0$, and the image of $f \colon L \to \mathbb{R}$ is the whole of $I_{\star}$, by Lemma \ref{lemma:limits}. Moreover, $f$ has an inverse $\xi$ which satisfies the ODE \eqref{eq:ode_inverse}, with initial conditions $\xi(y_0) = s_0$ and $\xi'(y_0) = 1/f'(s_0)$. So, if $\beta (y) \coloneqq \xi'(y)$, then $\beta$ satisfies the ODE \eqref{eq:ode_beta}, with initial condition $\beta(y_0) = 1/f'(s_0)$, by the inverse function theorem. Note that
\begin{align*}
    c_{1}^{II} &= - \frac{1-f'(s_0)^2/b_{II}(y_0)^2}{\sin^{2n}(c y_0)/c^2n^2} \, , \\
    c_{1}^{III} &= - \frac{1-f'(s_0)^2/b_{III}(y_0)^2}{\sinh^{2n}(c y_0)/c^2n^2} \, .
    \end{align*}
Therefore, if $\sign (f'(s_0)) = \pm 1$,
\begin{equation*}
\xi(y) = s_0 + \int_{y_0}^{y} \beta^{\star}_{\pm} (z) dz \, .
\end{equation*}
The maximal interval of definition of $f$ is the image of $\xi \colon I_{\star} \to \mathbb{R}$. This can be obtained by studying the convergence or divergence of the appropriate integrals, which we will do next.

\end{enumerate}

Recall that $f$ is timelike (resp. spacelike, lightlike) if, and only if, $c_{1}^{\star} > 0$ (resp. $c_{1}^{\star} < 0$, $c_{1}^{\star} = 0$).

Suppose that $c_{1}^{\star} > 0$, \textit{i.e.}, that $f$ is timelike. In this case, we see that $\beta^{\star}_{\pm}$ is defined in the whole of $I_{\star}$. The domain of $f$, as discussed before, is the image of the corresponding $\xi$, which is given by $\xi (y) = s_0 + \int_{y_0}^{y} \beta^{\star}_{\pm} (z) dz$. We now discuss the two cases $\star = II, III$ separately.

\begin{claim}
     For $\star=II$, $\int_{y_0}^{0} \beta^{II}_{\pm} (z) = \mp \infty$, while $\abs{\int_{y_0}^{\pi/2c} \beta^{II}_{\pm} (z) } < \infty $. Hence, if $f'(s_0) > 0$ (resp. $f'(s_0) < 0$), the maximal domain of $f$ is of the form $(-\infty, k)$ (resp. $(k, + \infty)$), with $\abs{k} < \infty $. This means that the timelike solutions blow up to $\pi/2c$ in finite time, and they do not blow up to $0$ in finite time.
\end{claim}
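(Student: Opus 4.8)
The plan is to read off the maximal domain of $f$ directly from the behaviour of the primitive $\xi(y) = s_0 + \int_{y_0}^{y}\beta^{II}_{\pm}(z)\,dz$, whose image is exactly the maximal interval of definition of $f$, as recalled just before the Claim. Since $\beta^{II}_{\pm}$ has constant sign on $I_{II} = (0,\pi/2c)$ by Remark \ref{rem:constantsign}, the map $\xi$ is strictly monotone, so its image is an open interval whose endpoints are the two one-sided limits $\lim_{y\to 0^+}\xi(y)$ and $\lim_{y\to(\pi/2c)^-}\xi(y)$. Thus everything reduces to deciding, at each endpoint of $I_{II}$, whether the corresponding improper integral of $\beta^{II}_{\pm}$ converges or diverges.

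First I would analyse the integrand near $y=0^+$. Using $\tan(cy)\sim cy$ and $\sin^{2n}(cy)\to 0$, one obtains
\[
\beta^{II}_{\pm}(y) = \frac{\pm 1}{\tan(cy)\sqrt{c^2 n^2 + c_{1}^{II}\sin^{2n}(cy)}} \sim \frac{\pm 1}{c^2 n\, y} \qquad (y\to 0^+),
\]
so by comparison with $1/y$ the integral $\int_{0}^{y_0}\abs{\beta^{II}_{\pm}}$ diverges. Tracking the sign of each branch together with the orientation of $\int_{y_0}^{0} = -\int_{0}^{y_0}$ then yields $\int_{y_0}^{0}\beta^{II}_{+} = -\infty$ and $\int_{y_0}^{0}\beta^{II}_{-} = +\infty$, i.e.\ $\mp\infty$ as asserted. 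Next I would treat $y\to(\pi/2c)^-$: here $\tan(cy)\to+\infty$ while $\sin^{2n}(cy)\to 1$, so $\beta^{II}_{\pm}(y)\to 0$ and extends continuously by $0$ to the closed endpoint; being bounded on $(y_0,\pi/2c)$ it is integrable there, giving $\abs{\int_{y_0}^{\pi/2c}\beta^{II}_{\pm}} < \infty$. This establishes the two displayed convergence statements.

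Finally I would assemble the conclusion about the domain. If $f'(s_0)>0$, then $\beta(y_0)=1/f'(s_0)>0$ forces the branch $\beta=\beta^{II}_{+}>0$, so $\xi$ is increasing; the endpoint limits give $\xi(y)\to s_0-\infty=-\infty$ as $y\to 0^+$ and $\xi(y)\to k:=s_0+\int_{y_0}^{\pi/2c}\beta^{II}_{+}$, a finite value, as $y\to(\pi/2c)^-$. Hence $\xi(I_{II})=(-\infty,k)$, which is the maximal domain of $f$. The case $f'(s_0)<0$ is symmetric, using $\beta^{II}_{-}$ and $\xi$ decreasing, and gives domain $(k,+\infty)$. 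Reinterpreting through $\xi=f^{-1}$: as $s\to k$ (finite) we have $f(s)\to\pi/2c$, so the solution reaches the right endpoint $\pi/2c$ in finite time, whereas the value $0$ is attained only in the limit $s\to\mp\infty$, so $f$ does \emph{not} blow up to $0$ in finite time.

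I expect the only genuine subtlety to be the bookkeeping: matching the sign of each branch $\beta^{II}_{\pm}$ and the orientation of $\int_{y_0}^{0}$ to the $\mp$ in the statement, and correctly pairing $\sign(f'(s_0))$ with the branch via $\beta(y_0)=1/f'(s_0)$. The analytic content -- a logarithmic-type divergence at $0$ and a bounded, vanishing integrand at $\pi/2c$ -- is routine asymptotics and should present no real difficulty.
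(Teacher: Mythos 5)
Your proof is correct and follows essentially the same route as the paper: a comparison with $1/y$ near $y=0^+$ (logarithmic divergence) and boundedness of $\beta^{II}_{\pm}$ near $\pi/2c$, then reading off the maximal domain of $f$ as the image of $\xi$. You are somewhat more careful than the paper about the constant in the asymptotic and the sign/orientation bookkeeping, but the substance is identical.
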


\begin{claimproof} \pushQED{\qed}
When $y \approx 0$, we have that
\begin{equation*}
\beta^{II}_{\pm} (y) \sim \frac{\pm 1}{\tan(cy)} \approx \frac{\pm 1}{cy} \, ,
\end{equation*}
and
\begin{equation*}
\int_{y_0}^{0} \frac{\pm 1}{cz} dz \hspace{.5cm}\text{   diverges.   }
\end{equation*}
However,
\begin{equation*}
    \lim_{y \to \left(\pi/2c\right)^-} \beta^{II}_{\pm} (z) dz = 0 \, ,
\end{equation*}
and so
\begin{equation*}
    \int_{y_0}^{\pi/2c} \beta^{II}_{\pm}(z) dz \hspace{.5cm}\text{   converges.   }
\end{equation*}
Note that this argument not only works when $c_{1}^{II} >0$, but, more generally, when $c_{1}^{II} > -c^2 n^2$. Recall Remark \ref{rem:constantsign} to deduce the claim on $f$. \qedhere
\popQED
\end{claimproof}

\begin{claim}
     For $\star=III$, $\abs{\int_{y_0}^{-\infty} \beta^{III}_{\pm} (z)} < \infty$, while $\int_{y_0}^{0} \beta^{III}_{\pm} (z) = \pm \infty $. Hence, if $f'(s_0) > 0$ (resp. $f'(s_0) < 0$), the maximal domain of $f$ is of the form $(k,\infty)$ (resp. $(-\infty,k)$), with $\abs{k} < \infty $. This means that the timelike solutions blow up to $-\infty$ in finite time.
\end{claim}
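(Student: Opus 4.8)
The plan is to run the asymptotic argument used for the preceding claim (the case $\star = II$), now on the interval $I_{III} = (-\infty,0)$. Throughout we are in the time-like regime $c_1^{III} > 0$, so that $c^2 n^2 + c_1^{III}\sinh^{2n}(cy) \geq c^2 n^2 > 0$ on all of $I_{III}$; hence $\beta^{III}_{\pm}$ is smooth and finite on the \emph{open} interval $(-\infty,0)$ and can only blow up at the endpoint $0$. By Lemma~\ref{lemma:critical point} the absence of an interior blow-up of $\beta^{III}_{\pm}$ in $(-\infty,0)$ means $f$ has no critical point, so we are in case~(2) above: $f$ is injective with inverse $\xi$, where $\xi' = \beta^{III}_{\pm}$ solves \eqref{eq:ode_beta} with $\beta^{III}_{\pm}(y_0) = 1/f'(s_0)$, and the maximal domain of $f$ equals the image $\xi(I_{III})$. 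By Remark~\ref{rem:constantsign} the relevant branch keeps a constant sign, so $\xi$ is monotone and its image is a half-line whose endpoints are governed precisely by the two improper integrals in the statement.

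First I would analyse $y \to -\infty$: there $\abs{\tanh(cy)} \to 1$, while $\sinh^{2n}(cy) \sim 2^{-2n} e^{-2ncy}$, so $\sqrt{c^2 n^2 + c_1^{III}\sinh^{2n}(cy)} \sim 2^{-n}\sqrt{c_1^{III}}\, e^{-ncy}$ and hence $\abs{\beta^{III}_{\pm}(y)} \sim 2^{n}(c_1^{III})^{-1/2} e^{ncy}$. Since $e^{ncy}$ decays exponentially as $y \to -\infty$, the integral $\int_{y_0}^{-\infty}\beta^{III}_{\pm}$ converges absolutely, which is the bound $\abs{\int_{y_0}^{-\infty}\beta^{III}_{\pm}} < \infty$. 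Next I would analyse $y \to 0^-$, where $\tanh(cy) \sim cy$ and $\sqrt{c^2 n^2 + c_1^{III}\sinh^{2n}(cy)} \to cn$, so that $\beta^{III}_{\pm}(y) \sim \frac{\pm 1}{c^2 n\, y}$. Comparison with $\int_{y_0}^{0} dz/z$ then forces $\int_{y_0}^{0}\beta^{III}_{\pm}$ to diverge; tracking the sign of $y < 0$ shows that one branch tends to $+\infty$ and the other to $-\infty$, which is the divergence recorded in the statement.

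Finally I would assemble the conclusion. With $\xi(y) = s_0 + \int_{y_0}^{y}\beta^{III}_{\pm}$, the first estimate yields a finite limit $k \coloneqq s_0 + \int_{y_0}^{-\infty}\beta^{III}_{\pm} \in \mathbb{R}$ as $y \to -\infty$, while the second gives $\xi(y) \to \pm\infty$ as $y \to 0^-$, the sign being set by the branch. Since $\beta^{III}_{\pm}(y_0) = 1/f'(s_0)$, monotonicity of $\xi$ shows that $f'(s_0) > 0$ produces the image $(k,+\infty)$ and $f'(s_0) < 0$ produces $(-\infty,k)$; this is the maximal domain of $f$. In both cases the finite endpoint $k$ corresponds to $y \to -\infty$, so $f = \xi^{-1}$ satisfies $\lim_{s \to k} f(s) = -\infty$, i.e. the time-like solution reaches the finite boundary $s = k$ of its maximal interval while escaping to $-\infty$. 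I expect the one genuinely delicate point to be the sign bookkeeping in these last two steps — correctly matching the branch $\beta^{III}_{\pm}$, equivalently $\sign f'(s_0)$, both to the orientation of the half-line and to the endpoint toward which $f$ blows up — the asymptotics themselves being routine.
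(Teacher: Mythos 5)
Your argument is correct and follows essentially the same route as the paper's proof: the exponential decay $\beta^{III}_{\pm}(y)\sim e^{ncy}$ as $y\to-\infty$ gives convergence of the first integral, the $1/(cy)$ behaviour as $y\to 0^-$ gives divergence of the second, and Remark~\ref{rem:constantsign} together with $\xi(y)=s_0+\int_{y_0}^{y}\beta^{III}_{\pm}$ converts this into the stated maximal domain and blow-up of $f$. The extra care you take with the branch/sign matching is a welcome refinement of what the paper leaves implicit, but it is not a different method.
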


\begin{claimproof}
    When $y \to -\infty$, we have that
    \begin{equation*}
    \beta^{III}_{\pm} (y) \sim \frac{\pm 1}{\sinh^n(-cy)} \sim \pm e^{ncy} \, ,
    \end{equation*}
    and so
    \begin{equation*}
    \int_{y_0}^{-\infty} \beta^{III}_{\pm} (z) dz \hspace{.5cm}\text{   converges.   }
    \end{equation*}
    Moreover, when $y \to 0^-$,
    \begin{equation*}
        \pushQED{\qed}
        \beta^{III}_{\pm} (y) \sim  \frac{\pm 1}{\tanh(cy)} \sim \frac{\pm 1}{cy} \, ,
        \end{equation*}
        and
        \begin{equation*}
        \int_{y_0}^{0} \frac{\pm 1}{cz} dz \hspace{.5cm}\text{   diverges.   }
        \end{equation*}
    Recall Remark \ref{rem:constantsign} to deduce the claim on $f$. \qedhere
    \popQED

\end{claimproof}

Suppose now that $c_{1}^{\star} < 0$, \textit{i.e.}, that $f$ is spacelike.

\begin{itemize}
    \item For $\star=II$, there are three cases:
    \begin{enumerate}
        \item If $c_{1}^{II} < -c^2n^2$, then there is a point where $\beta^{II}_{\pm}$ blows up. By Remark \ref{rem:constantsign} and Lemma \ref{lemma:minmax}, $f$ has a maximum. \qed
        \item If $-c^2n^2 < c_{1}^{II} < 0$, then $\beta^{II}_{\pm}$ doesn't blow up in finite time. And, as in the case $c_{1}^{II}>0$, if $f'(s_0) > 0$ (resp. $f'(s_0) < 0$), we have that the domain of $f$ is of the form $(-\infty, k)$ (resp. $(k, \infty)$), with $\abs{k} < \infty $. This means that these solutions blow up to $0$ in finite time. \qed
        \item If $c_{1}^{II} = -c^2n^2$, then $\beta^{II}_{\pm}$ has a finite limit in $\pi/2c$. This gives us a monotonic solution that blows up to $\pi/2c$, and is the \textit{last} one to do so. 
        \begin{proof}
        An elementary calculation shows that
         \begin{equation*}
            \lim_{y \to \left(\pi/2c \right)^-} \beta_{\pm}^{II} (y) =  \lim_{y \to \left(\pi/2c \right)^-} \frac{\pm 1}{nc\tan(c y)\sqrt{1-\sin^{2n}(c y)}} = \frac{\pm 1}{cn\sqrt{n}}  \, .
         \end{equation*}
         Recall Remark \ref{rem:constantsign} to deduce the claim on $f$.
        \end{proof}
    \end{enumerate}

    \item For $\star=III$, we know that spacelike solutions are globally defined, as they cannot blow up in finite time because, by Lemma \ref{lemma:uniqueness}, they are bounded by the lightlike solutions, which are well-defined everywhere. But the corresponding $\beta$ blows up in finite time, which corresponds to $f$ having a critical point (Lemma \ref{lemma:critical point}). Therefore, by Remark \ref{rem:constantsign}, all spacelike solutions will have an absolute minimum (Lemma \ref{lemma:minmax}), and tend asymptotically to $0$ (Lemma \ref{lemma:limits}). \qed
\end{itemize}

We summarise the classification we have obtained in the following theorem, and illustrate it in figures \ref{fig:typeII} and \ref{fig:typeIII}. Note that the solutions $f$ to \eqref{eq:ode_grim_reapers} can be extended to maps $N\supseteq P\times L'\ni (p,s)\mapsto f(s)$, similarly to the classical Grim Reapers in $\mathbb{R}^3$.

\begin{thmm} \label{thm:maintheorem}
Let $n \geq 1$, $(P,g_P)$ an $n$-dimensional Riemannian manifold, and $\left( N = P \times \mathbb{R} , \, g_N = g_P + \diff s^2 \right)$. Given $\star \in \{II,III\}$, define $X_{\star} = b_{\star}(t) \, \partial_t$ a conformal Killing vector field on $\mathcal{RW} \left( N,g_N, I_{\star}, b_{\star} \right)$. Then, the $X_{\star}$-solitons which are graphs of functions of the form
\begin{align*}
P \times L' \subseteq N &\to I_{\star} \\
(p,s) &\mapsto f(s) \, ,
\end{align*}
where $L' \subseteq \mathbb{R}$ is an open interval, and $f$ has to be a solution to ODE \eqref{eq:ode_grim_reapers}. Denote the maximal interval of definition of such an $f$ by $L$. These $X_{\star}$-solitons are in one, and only one, of the following classes:
\begin{itemize}
    \item For $\star = II$,
\begin{enumerate}
    \item Type II.A: timelike, $L = (-\infty,k)$ or $(k,\infty)$ for some $k \in \mathbb{R}$, $f$ blows up to $\pi/2c$ at $k$ and tends asymptotically to $0$ on the other end of $L$.
    \item Type II.B: spacelike, $L = \mathbb{R}$, $f$ has a global maximum and tends asymptotically to $0$ in both ends of $L$.
    \item Type II.C: spacelike, $L = (-\infty,k)$ or $(k,\infty)$ for some $k \in \mathbb{R}$, $f$ blows up to $\pi/2c$ at $k$ and tends asymptotically to $0$ on the other end of $L$.

  \end{enumerate}
    \item For $\star=III$,
    \begin{enumerate}
        \item Type III.A: timelike, $L = (-\infty,k)$ or $(k,\infty)$ for some $k \in \mathbb{R}$, $f$ blows up to $-\infty$ at $k$ and tends asymptotically to $0$ on the other end of $L$.
        \item Type III.B: spacelike, $L = \mathbb{R}$, $f$ has a global minimum and tends asymptotically to $0$ in both ends of $L$. \qed
      \end{enumerate}
\end{itemize}
\end{thmm}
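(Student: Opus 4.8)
The plan is to read off the classification from the ODE analysis already assembled above. By Proposition \ref{prop:finalode} together with the specialisation at the start of this section, a graph $(p,s)\mapsto f(s)$ over $P\times L'$ is a $t$-independent $X_\star$-soliton precisely when $f$ solves \eqref{eq:ode_grim_reapers}; so the entire problem reduces to classifying the maximal solutions of that second-order ODE by their interval of definition and qualitative shape. First I would fix data $(s_0,y_0)\in\mathbb{R}\times I_\star$ and a value $f'(s_0)$, and note that the causal character of the graph is constant along $f$ (Remark \ref{rem:causality} and the constancy of $\varepsilon'$), being governed by the sign of $c_1^\star$: the case $c_1^\star>0$ is time-like, $c_1^\star<0$ is space-like, and $c_1^\star=0$ gives exactly the excluded light-like solutions $l^\star_\pm$. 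Reversibility (Lemma \ref{lemma:reversibility}) then lets me treat only $f'(s_0)>0$ and recover the mirror solutions with domain $(k,\infty)$ from those with domain $(-\infty,k)$.

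Next I would pass to the first-order equation \eqref{eq:ode_beta} for $\beta=\xi'$ with $\xi=f^{-1}$ (Lemma \ref{lemmainverse}), whose solutions $\beta^\star_\pm$ are explicit. Two facts drive everything. First, by Lemma \ref{lemma:critical point}, $f$ has a critical point if and only if $\beta$ blows up to $\pm\infty$ at an interior value $y_l$, and by Lemma \ref{lemma:minmax} any such critical point is forced to be an absolute maximum when $\star=II$ and an absolute minimum when $\star=III$. Second, the maximal interval $L=\xi(I_\star)$ is detected by the convergence or divergence of $\int\beta^\star_\pm$ at the two endpoints of $I_\star$: the Claims above show that for $\star=II$ the integral diverges at $0$ (so $f\to 0$ asymptotically, as $s\to\pm\infty$) and converges at $\pi/2c$ (so $f$ reaches $\pi/2c$ at a finite $k$), while for $\star=III$ it converges at $-\infty$ and diverges at $0$. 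Lemma \ref{lemma:limits} confirms that the only admissible limits are the endpoints of $I_\star$, and Lemma \ref{lemma:0_axis} guarantees that $0$ is never attained at a finite parameter, so it can only be approached asymptotically.

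With these pieces in hand, the classification is obtained by running through the sign of $c_1^\star$ for each $\star$. For $\star=II$, the time-like case $c_1^{II}>0$ gives a monotone solution with $\beta^{II}_\pm$ defined on all of $I_{II}$; the integral analysis yields $L=(-\infty,k)$, blow-up to $\pi/2c$ at $k$, and the asymptote $0$ at the infinite end (Type II.A). The space-like case splits by comparing $c_1^{II}$ with $-c^2n^2$: when $c_1^{II}<-c^2n^2$ the function $\beta^{II}_\pm$ blows up in the interior, so $f$ acquires a global maximum, is globally defined, and decays to $0$ at both ends (Type II.B); when $-c^2n^2\le c_1^{II}<0$ the integral again converges at $\pi/2c$ and diverges at $0$, giving the monotone solutions of Type II.C, the threshold $c_1^{II}=-c^2n^2$ being the last monotone one to reach $\pi/2c$ (there $\beta^{II}_\pm$ has a finite limit). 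For $\star=III$, the case $c_1^{III}>0$ gives Type III.A by the same convergence/divergence argument; for $c_1^{III}<0$ the space-like solutions are globally defined, since Lemma \ref{lemma:uniqueness} traps them between the everywhere-defined $l^{III}_\pm$, and the associated $\beta$ blows up in finite $y$, so $f$ has an absolute minimum and tends to $0$ at both ends (Type III.B). I would finish by checking mutual exclusivity (the classes are separated by causal character and by the presence or absence of an interior extremum) and exhaustiveness, using Example \ref{example:vertical} and Proposition \ref{prop:wing} to confirm that the vertical and wing-like possibilities add nothing graphical.

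The main obstacle I anticipate is the bookkeeping in the space-like Type II case: one must correctly locate the threshold $c_1^{II}=-c^2n^2$ at which the behaviour of $\beta^{II}_\pm$ switches between blowing up in the interior (forcing an interior maximum, hence a globally defined solution, Type II.B) and staying finite with a convergent integral at $\pi/2c$ (forcing a monotone solution reaching $\pi/2c$ in finite parameter, Type II.C), and to verify that the endpoint of $I_\star$ at which the integral converges genuinely corresponds to a finite $k$ while the divergent endpoint corresponds to $\pm\infty$. Matching these analytic alternatives to the geometric descriptions II.A, II.B, II.C without overlaps or gaps is where the care is required; the time-like cases and all of $\star=III$ are comparatively direct.
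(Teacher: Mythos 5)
Your proposal is correct and follows essentially the same route as the paper, which proves the theorem by exactly this accumulation of results: the reduction to the ODE \eqref{eq:ode_grim_reapers}, the passage to $\beta=\xi'$ and the explicit $\beta^{\star}_{\pm}$, the convergence/divergence analysis of $\int\beta^{\star}_{\pm}$ at the endpoints of $I_{\star}$ organised by the sign of $c_{1}^{\star}$ relative to $0$ and $-c^{2}n^{2}$, and Lemmata \ref{lemma:minmax}, \ref{lemma:limits}, \ref{lemma:0_axis}, \ref{lemma:uniqueness} and \ref{lemma:critical point} to pin down the qualitative shape. The only point worth noting is that your reading of the case $-c^{2}n^{2}\le c_{1}^{II}<0$ (blow-up to $\pi/2c$ in finite parameter, asymptote to $0$ at the infinite end) is the one consistent with the theorem statement and with Claim A.
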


        \begin{figure}[h]
            \centering
            \includegraphics{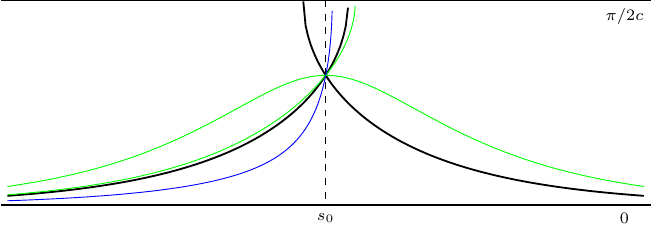}
            \caption{Some generating curves of Grim Reapers of Type II. In \textcolor{black}{black}, the lightlike solutions. In \textcolor{blue}{blue}, a timelike solution (Type II.A). In \textcolor{green}{green}, a spacelike solution with a maximum (Type II.B) and one which blows up to $\pi/2c$ (Type II.C). They have been obtained numerically using Wolfram Mathematica. }
            \label{fig:typeII}
            \end{figure}

        \begin{figure}[h]
            \centering
            \includegraphics{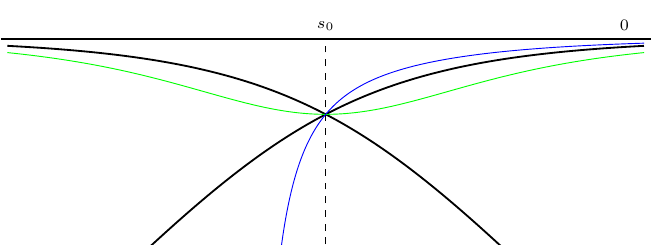}
            \caption{Some generating curves of Grim Reapers of Type III. In \textcolor{black}{black}, the lightlike solutions. In \textcolor{blue}{blue}, a timelike solution (Type III.A). In \textcolor{green}{green}, a spacelike solution (Type III.B). They have been obtained numerically using Wolfram Mathematica.  }
            \label{fig:typeIII}
            \end{figure}

\FloatBarrier

\section*{Acknowledgements}
M.-A. Lawn and M. Ortega are partially financed by the Spanish MICINN, project PID2020-116126GB-I00.

The authors would like to thank Miguel S\'anchez for his helpful comments on properties of Generalised Robertson-Walker spacetimes.

Finally, the authors would like to thank the referees for their valuable comments, which improved the paper.
\section*{Declarations}

\textbf{Data availability statement: }Data sharing not applicable to this article, as no datasets were generated or analysed during the current study.

\textbf{Conflict of interest statement: }On behalf of all authors, the corresponding author states that there is no conflict of interest.

\bibliographystyle{plainurl}
\bibliography{references.bib}
\end{document}